\newtheorem{lem}{Lemma}[section]
\newtheorem{thm}[lem]{Theorem}
\newtheorem{pro}[lem]{Proposition}
\newtheorem{exa}[lem]{Example}
\newtheorem{con}[lem]{Conjecture}
\newcommand{\ZZ}{{\mathbb{Z}}}
\newcommand{\A}{{\mathcal{A}}}
\newcommand{\B}{{\mathcal{B}}}
\newcommand{\C}{{\mathcal{C}}}
\newcommand{\U}{{\textsf{U}}}
\newcommand{\D}{{\textsf{D}}}
\renewcommand{\P}{{\mathcal{P}}}
\newcommand{\T}{{\mathcal{T}}}
\begin{document}

\title{Exterior Pairs and Up Step Statistics on Dyck Paths}

\author{Sen-Peng Eu}
\address{Department of
Applied Mathematics, National University of Kaohsiung, Kaohsiung
811, Taiwan, ROC} \email{speu@nuk.edu.tw}

\author{Tung-Shan Fu}
\address{Mathematics
Faculty, National Pingtung Institute of Commerce, Pingtung 900,
Taiwan, ROC} \email{tsfu@npic.edu.tw}

\thanks{Partially supported by  National Science Council, Taiwan under
grants 98-2115-M-390-002 (S.-P. Eu) and 97-2115-M-251-001 (T.-S. Fu).}

\keywords{Dyck paths,  exterior pairs, ordered trees, planted trees, continued fractions}


\maketitle

\begin{abstract}
Let $\C_n$ be the set of Dyck paths of length $n$. In this paper, by a new automorphism
of ordered trees, we prove that the statistic
`number of exterior pairs', introduced by A. Denise and R. Simion,
on the set $\C_n$ is equidistributed with the statistic
`number of up steps at height $h$ with $h\equiv 0$ (mod 3)'. Moreover, for $m\ge 3$,
we prove that the two statistics
`number of up steps at height $h$ with $h\equiv 0$ (mod $m$)'
and `number of up steps at height $h$ with $h\equiv m-1$ (mod $m$)'
on the set $\C_n$ are `almost equidistributed'.
Both results are proved combinatorially.
\end{abstract}

\section{Introduction}

Let $\C_n$ denote the set of lattice paths, called \emph{Dyck paths} of length $n$, in the plane $\ZZ\times \ZZ$
from the origin to the point $(2n,0)$ using \emph{up step} $(1,1)$ and \emph{down step} $(1,-1)$ that never pass below the $x$-axis.
Let $\U$ and $\D$ denote an up step and a down step, respectively.
In \cite{DS}, Denise and Simion introduced and investigated the two statistics
`pyramid weight' and `number of exterior pairs' on the set $\C_n$.
A \emph{pyramid} in a Dyck path is a section of the form $\U^h\D^h$, a succession of $h$ up steps followed immediately by
$h$ down steps, where $h$ is called the {\em height} of the pyramid. The pyramid is {\em maximal} if it is not contained in a higher pyramid. The {\em pyramid weight} of a Dyck path is the sum of the heights of its maximal pyramids.  An \emph{exterior pair} in a Dyck path is a pair consisting of
an up step and its matching down step which do not belong to any pyramid.
For example, the path shown in Figure \ref{fig:pyramid} contains three maximal
pyramids with a total weight of 4 and two exterior pairs.

\begin{figure}[ht]
\begin{center}
\includegraphics[width=1.6in]{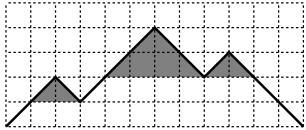}
\end{center}
\caption{\small A Dyck path with three maximal pyramids and two exterior pairs.}
\label{fig:pyramid}
\end{figure}

Since a Dyck path in $\C_n$ with a pyramid weight of $k$ contains $n-k$ exterior pairs, both of the statistics are essentially equidistributed on the set $\C_n$. However, they seem to be `isolated' from other statistics in the sense that so far
there are no known statistics that share the same distribution with them.
In the first part of this work, we discover one and establish an explicit connection with the statistic `number of exterior pairs'.

For a Dyck path, an up step that rises from the line $y=h-1$ to the line $y=h$
is said to be at \emph{height} $h$. It is well known \cite{Krew} that the number of paths in $\C_n$ with $k$ up steps at even height is enumerated by the \emph{Narayana number}
\[ N_{n,k}=\frac{1}{n}{{n}\choose{k}}{{n}\choose{k+1}},
\]
for $0\le k\le n-1$. Note that $\sum_{k=0}^{n-1}N_{n,k}=\frac{1}{n+1}{{2n}\choose{n}}=|\C_n|$ is the $n$th Catalan number. We consider the number $g^{(c;3)}_{n,k}$ of
the paths in $\C_n$ with $k$ up steps at height $h$ such that $h\equiv c$ (mod 3), for some $c\in\{0,1,2\}$. For example, the initial values of $g^{(c;3)}_{n,k}$ are shown in Figure \ref{fig:gnk}.

\begin{figure}[ht]
{\small
\begin{tabular}{ccc}
{$\begin{array}{c|rrrrrr} \hline
n \backslash k &   0 &   1 &   2 &  3 &  4 &  5\\
  \hline
  1   &   1 &     &     &    &   &     \\
  2   &   2 &     &     &    &   &     \\
  3   &   4 &   1 &     &    &   &     \\
  4   &   8 &   5 &   1 &    &   &     \\
  5   &  16 &  18 &   7 &  1 &   &     \\
  6   &  32 &  56 &  34 &  9 &  1&
\end{array}
$}

&

{$\begin{array}{c|rrrrrr} \hline
n \backslash k &   1 &   2 &   3 &  4 &  5 &  6\\
  \hline
  1   &   1 &     &     &    &    &  \\
  2   &   1 &   1 &     &    &    &  \\
  3   &   2 &   2 &   1 &    &    &  \\
  4   &   4 &   6 &   3 &  1 &    &  \\
  5   &   8 &  17 &  12 &  4 &  1 &  \\
  6   &  16 &  46 &  44 & 20 &  5 & 1\\
\end{array}
$}

&

{$\begin{array}{c|rrrrrr} \hline
 n \backslash k &   0 &   1 &   2 &  3 &  4  &  5\\
  \hline
  1   &   1 &     &     &    &   &     \\
  2   &   1 &   1 &     &    &   &     \\
  3   &   1 &   3 &   1 &    &   &     \\
  4   &   1 &   7 &   5 &  1 &   &     \\
  5   &   1 &  15 &  18 &  7 &  1&      \\
  6   &   1 &  31 &  56 & 34 &  9& 1
\end{array}
$}  \\ & & \\
$g^{(0;3)}_{n,k}$ & $g^{(1;3)}_{n,k}$ & $g^{(2;3)}_{n,k}$
\end{tabular}
}
\caption{\small  The distribution of Dyck paths with respect to $g^{(c;3)}_{n,k}$.}
\label{fig:gnk}
\end{figure}

To our surprise, the distribution $g^{(0;3)}_{n,k}$, shown in Figure \ref{fig:gnk},
coincides with the distribution of the statistic `number of exterior pairs'
on the set $\C_n$ (cf. \cite[Figure 2.4]{DS}).
In addition to an algebraic proof by the method of generating functions (see Example \ref{exa:GFproof}), one of the main results in this paper is a bijective proof of the equidistribution of these two statistics (Theorem \ref{thm:main-2}), which
is established by a recursive construction. To our knowledge, it is not equivalent to any previously known bijection on the set $\C_n$.

\smallskip
\begin{thm} \label{thm:main-2} For $0\le k\le n-2$, there is a bijection $\Pi:\C_n\rightarrow\C_n$ such that a path $\pi\in\C_n$ with $k$ exterior pairs is carried to the corresponding path $\Pi(\pi)$ containing $k$ up steps at height $h$ with $h\equiv 0$ (mod 3).
\end{thm}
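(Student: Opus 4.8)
The plan is to recast the statement as an automorphism of ordered (plane) trees and to build the map recursively, using the period-$3$ continued fraction attached to the up-step statistic as a blueprint. First I would pass from Dyck paths to ordered trees: a path in $\C_n$ corresponds to an ordered tree with $n$ edges, the up step at height $h$ becoming the edge into a vertex at depth $h$. Under this encoding the statistic ``up steps at height $h\equiv 0\pmod 3$'' becomes ``non-root vertices at depth $\equiv 0\pmod 3$''. For the exterior pairs I would first record the elementary fact (implicit in the pyramid/exterior dichotomy above) that an up--down pair, i.e.\ the edge into a vertex $v$ together with its matching down step, lies in a pyramid precisely when the subtree rooted at $v$ is a single descending chain; hence the number of exterior pairs equals the number of non-root vertices $v$ whose subtree is \emph{not} a path, while the pyramid weight counts the vertices lying on pendant chains. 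Thus Theorem~\ref{thm:main-2} is equivalent to producing an automorphism $\widehat\Pi$ of the set of ordered trees with $n$ edges that sends ``non-straight non-root vertices'' to ``non-root vertices at depth $\equiv 0 \pmod 3$''.

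Before constructing $\widehat\Pi$ I would set up the algebraic backbone that both guides the recursion and independently proves equidistribution. On the exterior side, the first-return (arch) decomposition splits a path into a sequence of arches, each either a pyramid arch $\U^h\D^h$ (no exterior pair) or a non-pyramid arch $\U\sigma\D$ whose outer pair is exterior and whose interior $\sigma$ recurses; writing $E=E(x,q)$ for the generating function by semilength and exterior pairs and $P=1/(1-x)$ for pyramids, this yields $E = 1/\bigl(1 - x[qE+(1-q)P]\bigr)$, equivalently
\[
E(1-2x+qx) - qx(1-x)E^2 = 1-x .
\]
On the up-step side, the Flajolet continued fraction whose height weights are $1,1,q$ repeating is $3$-periodic, and collapsing one period gives exactly the same quadratic. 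This is the generating-function proof promised in Example~\ref{exa:GFproof}, and the two matching decompositions---period $1$ on the exterior side, period $3$ on the up-step side---are what the bijection must reconcile.

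The recursive construction of $\widehat\Pi$ (equivalently $\Pi$) would then proceed by induction on $n$. The base stratum is the case $k=0$: paths with no exterior pair are exactly sequences of height-$0$ pyramids, while paths with no up step at height $\equiv 0\pmod 3$ are exactly those confined to heights $\le 2$; both classes have cardinality $2^{n-1}$, and I would fix a canonical, recursion-compatible bijection between them. For $k\ge 1$ the idea is to turn ``horizontal'' branching into ``vertical'' depth: a non-pyramid arch, carrying one exterior pair, should be converted into a single special up step that crosses a multiple-of-$3$ height, with the recursively transformed interior placed three levels higher, while the surrounding straight (pyramid) material is redistributed into the two non-special levels of each period. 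One then verifies that $\widehat\Pi$ is well defined, that the corresponding peeling operation---strip the topmost special up steps together with the intervening height band---inverts it, and that by induction each exterior pair is matched with exactly one depth-$\equiv 0$ vertex, so that $\alpha(\widehat\Pi T)=\beta(T)$.

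The hard part is precisely this reconciliation of the two periods: because the exterior recursion descends one level per exterior pair whereas the up-step recursion must descend three, the map cannot be a level-by-level relabeling of arches, and the two ``inert'' levels of each period have to be filled coherently by reorganizing the straight parts. Making that reorganization simultaneously length-preserving, bijective, and statistic-preserving---so that the global induction closes and yields an honest automorphism rather than merely an equinumerous correspondence on each fiber---is the delicate step; checking that it respects the boundary $k\le n-2$ (forced by pyramid weight $\ge 2$ for $n\ge 2$, since any tree with $n\ge 2$ edges has at least two vertices on pendant chains) is a final consistency check.
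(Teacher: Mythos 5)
Your set-up is correct and matches the paper's: the translation to ordered trees, the identification of an exterior pair with an edge $uv$ whose hanging subtree $\tau(uv)$ is not a bare path, and the generating-function check (your quadratic $qx(1-x)E^2-(1-2x+qx)E+(1-x)=0$ is exactly the equation the paper derives for $G^{(0;3)}$ from the period-$3$ continued fraction, so that part is sound and does prove equidistribution of the two statistics). The problem is that the theorem's content is the explicit bijection, and that is precisely the part you have not supplied. Your paragraph ``turn horizontal branching into vertical depth \dots while the surrounding straight (pyramid) material is redistributed into the two non-special levels of each period'' is the right intuition, but you then state yourself that making this ``simultaneously length-preserving, bijective, and statistic-preserving \dots is the delicate step.'' That delicate step is the proof. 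Even your base case is left open: you observe that both $k=0$ classes have cardinality $2^{n-1}$ but only promise ``a canonical, recursion-compatible bijection'' without giving one, and recursion-compatibility is exactly what cannot be deferred, since the $k\ge 1$ cases must splice these base objects in.

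For comparison, the paper resolves the reconciliation of the two periods as follows. It works with \emph{planted} trees and recurses on the children $w_1,\dots,w_r$ of the unique child $v$ of the root, splitting into three cases according to whether the last child subtree $\tau(vw_r)$, or the last two, are bare paths (non-exterior). The pyramid material is absorbed by turning bare-path subtrees into \emph{bouquets} (stars hanging at levels $1$ and $2$, hence invisible to the mod-$3$ statistic), the planting stalk becomes a new red edge at level $3$, and the recursively transformed siblings are reattached below that edge, shifting their red edges up by $3$. The inverse is obtained by locating the rightmost red edge at level $3$ and peeling. Without an analogue of this case analysis and an explicit inverse, your argument establishes only the numerical identity (which, abstractly, implies \emph{some} bijection exists, but that is the generating-function proof the paper already sets aside as Example \ref{exa:GFproof}, not the bijective one the theorem is meant to encapsulate). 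The gap, concretely: you never specify how a non-pyramid arch with several children---some carrying exterior pairs, some bare---is reassembled so that the map is well defined and invertible.
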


\smallskip
Recall that a path in $\C_n$ with $k$ up steps at even height contains $n-k$ up steps at odd height and that  $N_{n,k}=N_{n,n-1-k}$ ($0\le k\le n-1$). It follows immediately that the two statistics `number of up step at even height' and `number of up steps at odd height' are equidistributed on the set $\C_n$. Specifically, the number of paths in $\C_n$ with $k$ steps at even height equals the number of paths with $k+1$ up steps at odd height. (However, the one-to-one correspondence between the two sets is not apparent.) Moreover, as one has noticed  in Figure \ref{fig:gnk} that $g^{(0;3)}_{n,k}=g^{(2;3)}_{n,k+1}$ for $k\ge 1$, the two statistics `number of up steps at height $h$ with $h\equiv 0$ (mod 3)' and 'number of up steps at height $h$ with $h\equiv 2$ (mod 3)' are almost equidistributed on the set $\C_n$.

Motivated by this fact, for an integer $m\ge 2$ and a set $R\subseteq\{0,1,\dots,m-1\}$
we study the enumeration of the paths in $\C_n$ with $k$ up steps at height $h$ such
that $h\equiv c$ (mod $m$) and $c\in R$.
Let $g^{(R;m)}_{n,k}$ denote this number and let $G^{(R;m)}$
be the generating function for $g^{(R;m)}_{n,k}$, where
\[ G^{(R;m)}=G^{(R;m)}(x,y)=\sum_{n\ge 0}\sum_{k\ge 0}  g^{(R;m)}_{n,k} y^k x^n. \]
We shall show that $G^{(R;m)}$ satisfies an equation that is expressible in terms of continued fractions (Theorem \ref{thm:continued}), which is equivalent to a
quadratic equation in $G^{(R;m)}$.
If $R$ is a singleton, say $R=\{c\}$, we write $g^{(c;m)}_{n,k}$ and $G^{(c;m)}$ instead. The other main result in this paper is to prove combinatorially that the two statistics `number of up steps at height $h$ with $h\equiv m-1$ (mod $m$)' and `number of up steps at height $h$ with $h\equiv 0$ (mod $m$)' are almost equidistributed, i.e., $g^{(0;m)}_{n,k}=g^{(m-1;m)}_{n,k+1}$, for $k\ge 1$, and $g^{(0;m)}_{n,0}=g^{(m-1;m)}_{n,0}+g^{(m-1;m)}_{n,1}$ (see Theorem \ref{thm:main}).

\smallskip
\begin{thm} \label{thm:main} For $m\ge 2$, the following equation holds.
\begin{equation} \label{eqn:vanish}
G^{(m-1;m)}-y\cdot G^{(0;m)}=\frac{(1-y)U_{m-2}(\frac{1}{2\sqrt{x}})}{\sqrt{x}U_{m-1}(\frac{1}{2\sqrt{x}})},
\end{equation}
where $U_n(x)$ denotes the $n$th Chebyshev polynomial of the second kind, $U_n(\cos\theta)=\frac{\sin((n+1)\theta)}{\sin\theta}$.
\end{thm}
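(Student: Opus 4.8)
The plan is to work with the Flajolet-type periodic continued fraction underlying Theorem \ref{thm:continued} and to exploit a ``rotation'' of its period, so that the two generating functions become directly related. For $j\ge 0$ let $A_j$ denote the generating function for the Dyck paths that stay weakly above the line $y=j$, in which an up step reaching absolute height $h$ is marked by $y$ precisely when $h\equiv 0\pmod m$; thus $G^{(0;m)}=A_j$ for $j=0$. Decomposing a path at level $j$ by its first return to $y=j$ gives $A_j=1+w_{j+1}\,x\,A_{j+1}A_j$, where $w_{j+1}=y$ if $j+1\equiv 0\pmod m$ and $w_{j+1}=1$ otherwise; equivalently $A_j=(1-w_{j+1}xA_{j+1})^{-1}$. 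Since the weights are periodic of period $m$, this is exactly the periodic continued fraction furnished by Theorem \ref{thm:continued}.

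The first key step is a structural identity coming from shifting a path down by one unit. Shifting decreases every up-step height by $1$, so it carries the once-elevated paths counted by $A_1$ (marked at heights $\equiv 0$) bijectively onto ground-level paths marked at heights $\equiv -1\equiv m-1\pmod m$; hence $A_1=G^{(m-1;m)}$. Because an up step at height $1$ is unmarked ($1\not\equiv 0\bmod m$ for $m\ge 2$), the first-return decomposition at the ground level reads $G^{(0;m)}=A_0=\dfrac{1}{1-xA_1}=\dfrac{1}{1-x\,G^{(m-1;m)}}$. Writing $u=G^{(0;m)}$ and $v=G^{(m-1;m)}$, this is the clean relation $u-xuv=1$, that is, $v=\dfrac{u-1}{xu}$, valid for every $m\ge 2$.

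The second step is to pin down $u$ by an explicit quadratic whose coefficients are Chebyshev polynomials. Set $P_k=x^{k/2}U_k\!\bigl(\tfrac{1}{2\sqrt{x}}\bigr)$; the defining recurrence of $U_k$ translates into $P_k=P_{k-1}-xP_{k-2}$ with $P_{-1}=0$ and $P_0=1$. Each level $z\mapsto(1-wxz)^{-1}$ is a M\"obius transformation, and composing the $m$ levels of one period is a M\"obius transformation whose matrix factors as a product of $m$ elementary factors. The $m-1$ weight-$1$ factors contribute the $(m-1)$-st power of a single fixed matrix, whose entries are the $P_k$ (this is the standard appearance of Chebyshev polynomials as convergents of the weight-$1$ continued fraction); multiplying in the one weight-$y$ factor and imposing the fixed-point condition $u=A_0$, then simplifying with the recurrence, I expect to obtain
\begin{equation}\label{eqn:quadratic0}
-\,yx\,P_{m-1}\,u^2+\bigl(P_m+yx\,P_{m-2}\bigr)u-P_{m-1}=0.
\end{equation}
This matrix-and-recurrence bookkeeping is where the Chebyshev polynomials genuinely enter, and it is the step I expect to be the main obstacle: it is routine but must be carried out with care about the indices $P_{m-3},P_{m-2},P_{m-1},P_m$.

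With \eqref{eqn:quadratic0} in hand the conclusion follows quickly. From $v=\dfrac{u-1}{xu}$ I get
\begin{equation}\label{eqn:combine}
v-yu=\frac{u-1-xyu^2}{xu}.
\end{equation}
Solving \eqref{eqn:quadratic0} for $xyu^2$ and substituting gives $u-1-xyu^2=\dfrac{(P_{m-1}-P_m-yxP_{m-2})\,u}{P_{m-1}}$, and since $P_{m-1}-P_m=xP_{m-2}$ by the recurrence, the bracket equals $x(1-y)P_{m-2}$. Hence \eqref{eqn:combine} collapses to $v-yu=\dfrac{(1-y)P_{m-2}}{P_{m-1}}$, and $\dfrac{P_{m-2}}{P_{m-1}}=\dfrac{U_{m-2}(\frac{1}{2\sqrt{x}})}{\sqrt{x}\,U_{m-1}(\frac{1}{2\sqrt{x}})}$ by the definition of $P_k$, which is exactly \eqref{eqn:vanish}. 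The case $m=2$, where the claim reduces to $G^{(1;2)}-yG^{(0;2)}=1-y$, gives a reassuring check of both \eqref{eqn:quadratic0} and the final combination.
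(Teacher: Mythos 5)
Your proof is correct, but it takes a genuinely different route from the paper's. The paper proves Theorem \ref{thm:main} bijectively: it cuts each path by the lines $y=mi-1$, reflects above-blocks and under-blocks about those lines to obtain an involution $\Omega$ carrying $F^{(m)}_{n,j,k}$ onto $F^{(m)}_{n,k+1,j-1}$ (Proposition \ref{pro:involution} and Theorem \ref{thm:Psi}), and then identifies the surviving discrepancy in the coefficients of $y^0$ and $y^1$ with the paths of height at most $m-2$, quoting Krattenthaler for the Chebyshev form of their generating function. You instead argue purely algebraically from the periodic continued fraction of Theorem \ref{thm:continued}: the shift identity $A_1=G^{(m-1;m)}$, hence $G^{(0;m)}=1/(1-xG^{(m-1;m)})$, plays the role of the paper's level shift, and the transfer-matrix computation supplies the Chebyshev polynomials. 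The one step you hedge on (``I expect to obtain'' the quadratic for $u$) is indeed routine and comes out as you state: with $M_1=\left(\begin{smallmatrix}0&1\\-x&1\end{smallmatrix}\right)$ one checks by induction that $M_1^{k}=\left(\begin{smallmatrix}-xP_{k-2}&P_{k-1}\\-xP_{k-1}&P_{k}\end{smallmatrix}\right)$, and multiplying by $M_y=\left(\begin{smallmatrix}0&1\\-xy&1\end{smallmatrix}\right)$ and using $P_k=P_{k-1}-xP_{k-2}$ gives $\left(\begin{smallmatrix}-xyP_{m-2}&P_{m-1}\\-xyP_{m-1}&P_{m}\end{smallmatrix}\right)$, whose fixed-point equation is exactly $-xyP_{m-1}u^2+(P_m+xyP_{m-2})u-P_{m-1}=0$; its specializations at $m=2,3$ agree with the paper's Example \ref{exa:GFproof}, and your final elimination via $P_{m-1}-P_m=xP_{m-2}$ is correct. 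Your route is shorter, needs no case analysis, and derives the Chebyshev quotient from scratch rather than citing the bounded-height enumeration; what it does not deliver is the explicit bijections $\Psi_j$ of Theorem \ref{thm:Psi}, which the authors regard as the main content of their argument.
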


We remark that $U_{m-2}(\frac{1}{2\sqrt{x}})/(\sqrt{x}U_{m-1}(\frac{1}{2\sqrt{x}}))$, a polynomial in $x$, is a generating function for the number of paths in $\C_n$ of height at most $m-2$, as pointed out by Krattenthaler \cite[Theorem 2]{Krat} (see also \cite{CW} and \cite{MV}). Note that in Eq.\,(\ref{eqn:vanish}) the terms with $y^i$ vanish, for $i\ge
2$.

\smallskip
 \section{Proof of Theorem \ref{thm:main-2}}
 In this section, we shall establish the bijection requested in Theorem \ref{thm:main-2}. A \emph{block} of a Dyck path
 is a section beginning with an up step whose starting point is on the $x$-axis and ending with the first down step that returns to the $x$-axis afterward. Dyck paths that have exactly one block are called \emph{primitive}. We remark that the requested bijection is established for primitive Dyck paths first and then for ordinary ones in a block-by-block manner. In fact, the bijection is constructed in terms of ordered trees.

 An \emph{ordered tree} is an unlabeled rooted tree where the order of the subtrees of a vertex is significant.
Let $\T_n$ denote the set of ordered trees with $n$ edges.
There is a well-known bijection $\Lambda:\C_n\rightarrow\T_n$ between Dyck paths and ordered trees \cite{DZ}, i.e., traverse the tree from the root in preorder, to each edge passed on the way down there corresponds an up step and to each edge passed on the way up there corresponds a down step. For example, Figure \ref{fig:preorder} shows a Dyck path of length 14 with 2 blocks and the corresponding ordered tree.

\begin{figure}[ht]
\begin{center}
\includegraphics[width=4.9in]{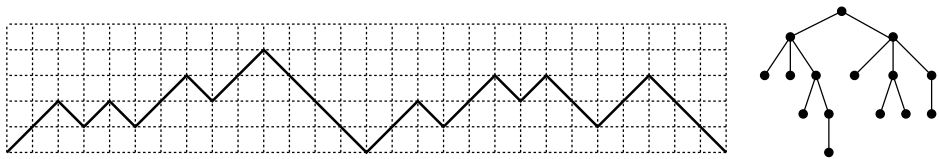}
\end{center}
\caption{\small A Dyck path and the corresponding ordered tree.} \label{fig:preorder}
\end{figure}

For an ordered tree $T$ and two vertices $u,v\in T$, we say that $v$
is a \emph{descendant} of $u$ if $u$ is contained in the path from
 the root to $v$. If also $u$ and $v$ are adjacent, then $v$ is called a \emph{child} of $u$.
A vertex with no children is called a \emph{leaf}. By a \emph{planted (ordered) tree} we mean an ordered tree whose root has only one child. (We will speak of \emph{planted trees} without including the word `ordered'.)
Let $\tau(uv)$ denote the planted subtree of
$T$ consisting of the edge $uv$ and the descendants of $v$, and let $T-\tau(uv)$
denote the remaining part of $T$ when $\tau(uv)$ is removed. In this case, the edge $uv$ is called the \emph{planting stalk} of $\tau(uv)$.
It is easy to see that the Dyck path corresponding to a planted tree is primitive.

The \emph{level} of edge $uv\in T$ is defined to be the distance from the root to the end vertex $v$. The \emph{height} of $T$ is the highest level of the edges of $T$.
The edge $uv$ is said to be \emph{exterior} if $\tau(uv)$ contains at least two leaves. One can check that the exterior edges of $T$ are in one-to-one correspondence with the exterior pairs of the corresponding Dyck path $\Lambda^{-1}(T)$. Moreover, the edges at level $h$ in $T$ are in one-to-one correspondence with the up steps at height $h$ in $\Lambda^{-1}(T)$. Hence, under the bijection $\Lambda$, the following result leads to the bijection $\Pi=\Lambda^{-1}\circ\Phi\circ\Lambda$ requested in Theorem \ref{thm:main-2}.

\smallskip
\begin{thm} \label{thm:equidistribution-trees} For $0\le k\le n-2$, there is a bijection $\Phi:\T_n\rightarrow\T_n$ such that a tree $T\in\T_n$ with $k$ exterior edges is carried to the corresponding tree $\Phi(T)$ containing $k$ edges at level $h$ with $h\equiv 0$ (mod 3).
\end{thm}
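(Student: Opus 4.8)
The plan is to build $\Phi$ recursively, first on planted trees and then on arbitrary trees in a block-by-block fashion, mirroring the decomposition of a Dyck path into blocks. The first observation is that both statistics are additive over the root subtrees: if the root $r$ of $T$ has children $v_1,\dots,v_b$, then an edge is exterior precisely when the planted subtree hanging below its lower endpoint has at least two leaves, a condition internal to the block $\tau(rv_i)$ containing that edge; likewise the level of an edge is its distance to the root, which is unchanged when the edge is viewed inside its own block. Hence both the number of exterior edges and the number of edges at level $\equiv 0\pmod 3$ are sums of the corresponding counts over $\tau(rv_1),\dots,\tau(rv_b)$. So it suffices to produce an edge-count-preserving bijection $\Phi_0$ on planted trees that turns exterior edges into edges at level $\equiv 0\pmod 3$; applying $\Phi_0$ to each block and reassembling then yields $\Phi$, with bijectivity inherited from the block decomposition.

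For planted trees I would organize the recursion around the period-$3$ structure already visible in the level statistic. Writing $F_j$ for the generating function of ordered trees whose root is regarded as sitting at level $\equiv j\pmod 3$, with $y$ marking edges at absolute level $\equiv 0\pmod 3$, one obtains the continued-fraction system $F_0=1/(1-xF_1)$, $F_1=1/(1-xF_2)$, $F_2=1/(1-xyF_0)$ underlying Theorem \ref{thm:continued}. I would set up an exterior-side counterpart $E_0,E_1,E_2$: let $E_0$ mark exterior edges of an ordered tree, let $E_1$ add one extra $y$ exactly when the whole tree branches, and define $E_2=1/(1-xyE_0)$. The relation $E_0=1/(1-xE_1)$ is then transparent, since in the root decomposition an edge $rv_i$ is exterior iff the subtree below $v_i$ branches, so each planted subtree contributes $x$ times the $E_1$-weight of that subtree; and $E_2=1/(1-xyE_0)$ is transparent by construction. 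A planted tree's exterior count equals the $E_1$-weight of the subtree below its planting stalk, so what must be shown is $E_1=F_1$.

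The whole proof thus reduces to one non-transparent relation, $E_1=1/(1-xE_2)$, i.e.\ an explicit bijection writing an ordered tree weighted by $(\#\{\text{exterior edges}\}+\mathbf 1[\text{branches}])$ as a sequence of planted trees each weighted by $E_2$. That the naive term-by-term identity fails already on a chain $c-v-w$ — which has $E_1$-weight $0$, whereas the $E_2$-weight of the single edge hanging at its level-$1$ vertex is $1$ — shows a genuine rearrangement is unavoidable, and this is where I expect essentially all of the work to lie. Once this crux bijection is in hand, unfolding the three relations threads the marks through levels $2,5,8,\dots$ below the planting stalk, that is, through levels $\equiv 0\pmod 3$ of the planted tree, and produces the recursive $\Phi_0$; the unique-power-series solution of the shared system then guarantees $E_0=F_0$ as a consistency check.

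The main obstacle is the clash between the two statistics: being exterior is an intrinsic, offset-free property of the subtree below an edge, whereas lying at level $\equiv 0\pmod 3$ is an arithmetic property of the distance to the root. Reconciling them forces the crux bijection to carry the current residue modulo $3$ as recursive state and to reroute pendant chains and branch points so that exactly the intended edges are promoted to levels $\equiv 0\pmod 3$. Verifying that the resulting map is well defined, size-preserving, invertible, and statistic-exact — rather than merely matching generating functions as in Example \ref{exa:GFproof} — is the delicate step, and I would prove it by induction on the number of edges, checking both directions of the crux bijection against the three recursive relations.
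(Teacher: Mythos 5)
Your reduction to planted trees is exactly the paper's first step, and your generating-function bookkeeping correctly isolates where the difficulty sits: everything hinges on realizing, bijectively, the one relation you call the crux ($E_1=1/(1-xE_2)$, equivalently a map $\phi$ on planted trees sending the exterior-edge count to the count of edges at levels $3,6,9,\dots$). But you never construct that map --- you explicitly defer it (``this is where I expect essentially all of the work to lie''), and without it the argument establishes nothing beyond the equality of generating functions, which the paper already obtains separately in Example \ref{exa:GFproof}. Since the theorem asserts the existence of a bijection, the deferred step is not a detail; it is the entire content of the proof. Your observation that the naive term-by-term matching fails on a length-$2$ chain is correct and shows you see the obstacle, but seeing the obstacle is not the same as overcoming it.

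For comparison, the paper's construction of $\phi$ (its Proposition \ref{pro:equidistribution-planted trees}) works as follows: a planted tree with no exterior edge (a path) is sent to a bouquet of the same size; otherwise, letting $v$ be the child of the root with children $w_1,\dots,w_r$, one splits into three cases according to whether $\tau(vw_r)$, or failing that $\tau(vw_{r-1})$, contains an exterior edge. In each case the planting stalk $uv$ (always exterior) is re-created as a new edge $xy$ at level $3$ --- grafted onto the rightmost level-$3$ edge of $\phi(\tau(vw_r))$ in Case 1, or onto a freshly built path $pqxy$ in Cases 2 and 3, with the trailing pendant paths $\tau(vw_{r-1})$, $\tau(vw_r)$ absorbed as edges hanging from $q$ (the bouquet idea of Deutsch--Prodinger) --- and the recursively processed subtrees $\phi(\tau(vw_1)),\dots$ are hung below $xy$, shifting their red edges from level $3i$ to $3i+3$. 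The case distinction is precisely what makes the map invertible: one recovers which case occurred from whether $x$ has more than one child and whether another path of length at least $2$ emanates from $q$. You would need to supply a construction of this kind (and verify its inverse) for your proposal to become a proof; as it stands there is a genuine gap.
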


\smallskip
Our strategy is to decompose an ordered tree (from the root) into planted subtrees, find the corresponding trees of the planted subtrees, and then merge them (from their roots) together. In the following, we focus the construction of $\Phi$ on planted trees.

\smallskip
\subsection{Planted trees}
Let $\P_n\subseteq\T_n$ be the set of planted trees with $n$ edges. By a \emph{bouquet} of size $k$ ($k\ge 1$) we mean a planted tree such that there are $k-1$ edges emanating from the unique child of the root. Clearly, a bouquet is of height at most 2. Inspired by work of Deutsch and Prodinger \cite{DP}, bouquets are useful in our construction.
For convenience, the edges of a tree at level $h$ are colored \emph{red} if  $h\equiv 0$ (mod 3) and colored \emph{black} otherwise.
Now we establish a bijection $\phi:\P_n\rightarrow\P_n$ such that the exterior edges of $T\in\P_n$ are transformed to the red edges in $\phi(T)$.

\smallskip
\subsection{The map $\phi$.}

Given a  $T\in\P_n$, let $uv$ be the planting stalk of $T$. If $T$ contains no exterior edges then $T$ is a path of length $n$ and we define $\phi(T)$ to be a bouquet of size $n$. Otherwise, $T$ contains at least one exterior edge.  Note that the planting stalk $uv$ itself is one of the exterior edges of $T$. Let $w_1,\dots, w_r$ be the children of $v$, for some $r\ge 1$. Unless specified, these children are placed in numeric order of the subscripts from left to right. The tree $\phi(T)$ is recursively constructed with respect to $uv$ according to the following three cases.

\smallskip
 Case 1. \emph{Edge $vw_r$ is an exterior edge of $T$.} For $1\le j\le r$, we first construct the planted subtrees $T_j=\phi(\tau(vw_j))$. In particular, in $T_r$ we find the rightmost edge, say $xz$, at level 3. Then $\phi(T)$ is obtained from $T_r$ by adding an edge $xy$ (emanating from vertex $x$) to the right of $xz$ and adding $T_1,\dots,T_{r-1}$ under the edge $xy$ (i.e., merges the roots of $T_1,\dots,T_{r-1}$ with $y$). Note that the red edge $xy$ is created in replacement of the planting stalk $uv$ of $T$.

\smallskip
  Case 2. \emph{Edge $vw_r$ is not an exterior edge but $vw_{r-1}$ is an exterior edge.} Then $\tau(vw_r)$ is a path of a certain length, say $t$ ($t\ge 1$). For $1\le j\le r-1$, we first construct the planted subtrees $T_j=\phi(\tau(vw_j))$. In particular, let $pq$ be the planting stalk of $T_{r-1}$. Then  $\phi(T)$ is obtained from $T_{r-1}$ by adding a path $qxy$ of length 2 such that the edge $qx$ is the right most edge at level 2 (emanating from vertex $q$), and then adding $t-1$ more edges $qz_1,\dots,qz_{t-1}$ (emanating from vertex $q$) to the right of $qx$ and adding $T_1,\dots,T_{r-2}$ under the edge $xy$. Note that the planting stalk $uv$ of $T$ is replaced by the red edge $xy$ and that the subtree $\tau(vw_r)$ of $T$ is replaced by the edges $\{qx,qz_1,\dots,qz_{t-1}\}$.

 \smallskip
 Case 3. \emph{Neither $vw_{r-1}$ nor $vw_r$ is an exterior edge.} Then $\tau(vw_{r-1})$ and $\tau(vw_r)$ are paths of certain lengths. Let the lengths of $\tau(vw_{r-1})$ and $\tau(vw_r)$ be $t_1$ and $t_2$, respectively. For $1\le j\le r-2$, we first construct the planted subtrees $T_j=\phi(\tau(vw_j))$. To construct the tree $\phi(T)$, we create a path $pqxy$ of length 3, where vertex $p$ is the root. Next, add $t_1-1$ edges $qz_1,\dots,qz_{t_1-1}$ to the left of the edge $qx$ and add $t_2-1$ edges $qz_1',\dots,qz_{t_2-1}'$ to the right of the edge $qx$. Then add $T_1,\dots,T_{r-2}$  under the edge $xy$. Note that the planting stalk $uv$ of $T$ is replaced by the red edge $xy$ and that the subtree $\tau(vw_{r-1})$ (resp. $\tau(vw_{r})$) of $T$ is replaced by the edges $\{pq,qz_1,\dots,qz_{t_1-1}\}$ (resp. $\{qx,qz_1',\dots,qz_{t_2-1}'\}$).

\smallskip
\begin{exa} \label{exa:example-phi} {\rm
Let $T$ be the tree on the left of Figure \ref{fig:pqxy}. Note that the edges $uv$ and $ve$ are exterior edges. To construct $\phi(T)$, we need to form the subtrees $T_1=\phi(\tau(vc)), T_2=\phi(\tau(vd))$ and $T_3=\phi(\tau(ve))$. By Case 3 of the algorithm, $T_3$ is a path $pqxz$ of length 3, along with an edge $qh$ on the right of $qx$. Since $ve$ is an exterior edge of $T$, by Case 1, $\phi(T)$ is obtained from $T_3$ by adding the edge $xy$ and adding $T_1=yc$ and $T_2=yd$ under the edge $xy$, as shown on the right of Figure \ref{fig:pqxy}. Note that the planting stalk $uv$ of $T$ is transformed to the red edge $xy$, the rightmost one at level 3 in $\phi(T)$.
}
\end{exa}

\begin{figure}[ht]
\begin{center}
\includegraphics[width=1.75in]{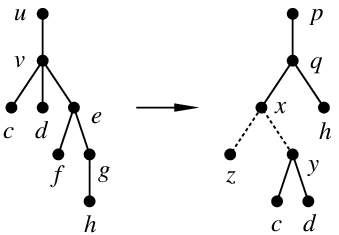}
\end{center}
\caption{\small A planted tree with and its corresponding tree.} \label{fig:pqxy}
\end{figure}

\smallskip
\begin{exa} \label{exa:example-phi-2} {\rm
Let $T$ be the tree on the left of Figure \ref{fig:pqab}. Note that the edges $uv$ and $vd$ are exterior edges. To construct $\phi(T)$, we need to form the subtrees $T_1=\phi(\tau(vc))$ and $T_2=\phi(\tau(vd))$. By Case 3 of the algorithm, $T_2$ is a path $pqab$ of length 3. Since $\tau(ve)$ is a path of length 2, by Case 2, $\phi(T)$ is obtained from $T_2$ by adding a path $qxy$ of length 2, along with the edge $qz$, and then adding $T_1=yc$ under the edge $xy$, as shown on the right of Figure \ref{fig:pqab}. Note that the planting stalk $uv$ of $T$ is transformed to the red edge $xy$, the rightmost one at level 3 in $\phi(T)$.
}
\end{exa}

\begin{figure}[ht]
\begin{center}
\includegraphics[width=1.9in]{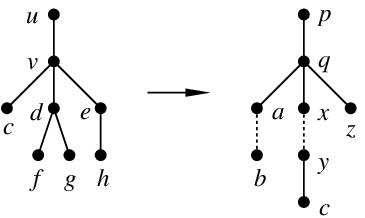}
\end{center}
\caption{\small A planted tree with and its corresponding tree.} \label{fig:pqab}
\end{figure}

\smallskip
\begin{exa} \label{exa:example-phi-3} {\rm
Let $T$ be the tree on the left of Figure \ref{fig:example-case-3}. To construct $\phi(T)$, we need to form the subtrees $T_1=\phi(\tau(vc))$ and $T_2=\phi(\tau(vd))$, which have been shown in Example \ref{exa:example-phi} and Example \ref{exa:example-phi-2}, respectively. Since neither $ve$ nor $vf$ is an exterior edge, by Case 3 of the algorithm, we create a path $pqxy$ of length 3, along with the edge $qg$ attached to the left of $qx$ and with the edges $qh, qi$ attached to the right of $qx$. As shown on the right of Figure \ref{fig:example-case-3}, the tree $\phi(T)$ is then obtained by adding $T_1=\tau(yc)$ and $T_2=\tau(yd)$ under the edge $xy$. Note that the planting stalk $uv$ of $T$ is transformed to the red edge $xy$, the unique one at level 3 in $\phi(T)$, and the previously constructed red edges in $T_1=\phi(\tau(vc))$ and $T_2=\phi(\tau(vd))$ are transformed to red edges in $\phi(T)$ by shifting them from level 3 to level 6.
}
\end{exa}

\begin{figure}[ht]
\begin{center}
\includegraphics[width=3.4in]{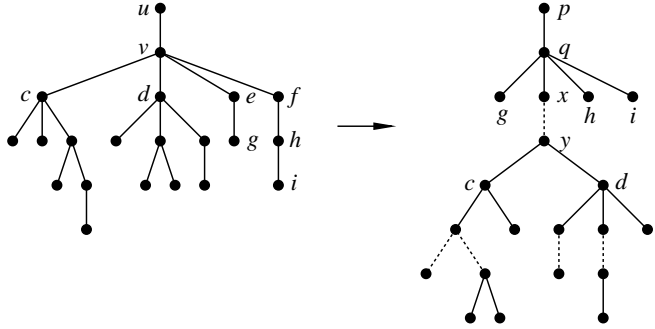}
\end{center}
\caption{\small A planted tree with and its corresponding tree.} \label{fig:example-case-3}
\end{figure}

\medskip
From the construction of $\phi$, we observe that the planting stalk of $T$ is transformed to the rightmost red edge at level 3 in $\phi(T)$, and that the other red edges recursively constructed so far (in $T_j$) are transformed to red edges in $\phi(T)$, either by shifting from level $3i$ to level $3i+3$ or by remaining at level $3i$ (as the ones in $T_{r}$ of Case 1 or in $T_{r-1}$ of Case 2), $i\ge 1$.  Hence the number of red edges in $\phi(T)$ equals the number of exterior edges in $T$.

\smallskip
\subsection{Finding $\phi^{-1}$} Indeed the map $\phi^{-1}$ can be recursively constructed by reversing the steps involved in the construction of $\phi$. To be more precise, we describe the construction below.

Given a $T\in\P_n$, if $T$ contains no red edges then $T$ is a bouquet of size $n$ and we define $\phi^{-1}(T)$ to be a path of length $n$. Otherwise, $T$ contains at least one red edge. Let $xy$ be the rightmost red edge at level 3 of $T$, and let $pqxy$ be the path from the root $p$ to $y$. Let $w_1,\dots,w_d$ be the children of $y$, for some $d$ ($d\ge 0$). The tree $\phi^{-1}(T)$ is recursively constructed with respect to $xy$ according to the following three cases.

\smallskip
 Case 1. \emph{Vertex $x$ has more than one child.} Let $Q=T-\tau(xy)$. For $1\le j\le d$, we first construct the planted subtrees $T_j=\phi^{-1}(\tau(yw_j))$ and  $T_{d+1}=\phi^{-1}(Q)$. Then $\phi^{-1}(T)$ is recovered by adding the subtrees $T_1,\dots,T_{d+1}$ under a new edge, say $uv$.  Note that  the red edge $xy$ of $T$ is replaced by the planting stalk $uv$ of $\phi^{-1}(T)$.

\smallskip
 Case 2. \emph{Vertex $x$ has only one child and there is another path $P$ of length at least 2 starting from $q$.} Since $xy$ is the rightmost red edge at level 3 of $T$, the path $P$ must be on the left of the edge $qx$.
 Note that there might be some edges, say $qz_1,\dots,qz_t$ ($t\ge 0$), on the right of $qx$. Let $Q=T-\tau(qx)-\{qz_1,\dots,qz_t\}$. For $1\le j\le d$, form the planted subtrees $T_j=\phi^{-1}(\tau(yw_j))$. Let $T_{d+1}=\phi^{-1}(Q)$ and let $T_{d+2}$ be a path of length $t+1$. Then $\phi^{-1}(T)$ is recovered by adding the subtrees $T_1,\dots,T_{d+2}$ under a new edge $uv$. Note that the planting stalk $uv$ of $\phi^{-1}(T)$ replaces the red edge $xy$ of $T$, and the path $T_{d+2}\subseteq\phi^{-1}(T)$ replaces the edges $\{qx,qz_1,\dots,qz_t\}\subseteq T$.

 \smallskip
 Case 3. \emph{Vertex $x$ has only one child and there are no other paths of length at least 2 starting from $q$.} In this case $xy$ is the unique red edge at level 3 in $T$, and there might be some edges emanating from $q$ on either side of the edge $qx$. Suppose that there are $t_1$ (resp. $t_2$) edges on the left (resp. right) of $qx$. For $1\le j\le d$, form the planted subtrees $T_j=\phi^{-1}(\tau(yw_j))$. Let $T_{d+1}$ and $T_{d+2}$ be two paths of length $t_1+1$ and $t_2+1$, respectively. Then  $\phi^{-1}(T)$ is recovered by adding the subtrees $T_1,\dots,T_{d+2}$ under a new edge $uv$.

From the construction of $\phi^{-1}$, we observe that the rightmost red edge at level 3 in $T$ is transformed to the planting stalk of $\phi^{-1}(T)$, and that the exterior edges recursively constructed so far (in $T_j$) remain exterior edges in $\phi^{-1}(T)$. Hence the number of exterior edges in $\phi^{-1}(T)$ equals the number of red edges in $T$.

\smallskip
We have established the following bijection.

\begin{pro} \label{pro:equidistribution-planted trees} For $0\le k\le n-2$, there is a bijection $\phi:\P_n\rightarrow\P_n$ such that a planted tree $T\in\P_n$ with $k$ exterior edges is carried to the corresponding planted tree $\phi(T)$ containing $k$ edges at level $h$ with $h\equiv 0$ (mod 3).
\end{pro}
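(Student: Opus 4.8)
The plan is to prove the Proposition by induction on the number of edges $n$, carrying along three assertions at once: that $\phi$ maps $\P_n$ into $\P_n$, that the number of red edges of $\phi(T)$ equals the number of exterior edges of $T$, and that the explicitly described map $\phi^{-1}$ is a genuine two-sided inverse. First I would dispose of the base case, where $T$ has no exterior edge: then $T$ is a path of length $n$, $\phi(T)$ is a bouquet of size $n$, and since a bouquet has height at most $2$ it has no edge at a level divisible by $3$; thus both statistics are $0$ and $\phi^{-1}$ returns the path. Every recursive case passes to the planted subtrees $\tau(vw_j)$, each of which has fewer than $n$ edges, so the induction hypothesis will be available for them.

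For the inductive step I would run through the three cases of $\phi$ and check, in each, that (a) $\phi(T)$ is planted and again has $n$ edges, (b) the new edge $xy$ lies at level $3$ and is the rightmost edge of $\phi(T)$ at that level, and (c) the red count is right. Assertion (a) is a direct tally together with the observation that the construction always hangs everything below a single child of the root; for instance in Case $2$ the path $\tau(vw_r)$ of length $t$ is replaced by the $t$ edges $qx,qz_1,\dots,qz_{t-1}$ while the single new stalk $xy$ takes the place of $uv$, so no edges are gained or lost. For (c) I would use that the exterior edges of $T$ are precisely the planting stalk $uv$ together with the exterior edges of the subtrees $\tau(vw_j)$; by the induction hypothesis each $T_j=\phi(\tau(vw_j))$ contributes that many red edges, and these re-enter $\phi(T)$ either unshifted (the copy of $T_r$ in Case $1$ or of $T_{r-1}$ in Case $2$) or shifted in level by $+3$ (the copies placed beneath $xy$), so they remain $\equiv 0 \pmod 3$. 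Adding the one new red edge $xy$, which accounts for $uv$, gives exactly the exterior count of $T$.

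The heart of the argument, and the step I expect to be the main obstacle, is showing that the displayed $\phi^{-1}$ really inverts $\phi$, i.e.\ that the three cases of the two constructions correspond. Here point (b) is essential: because $\phi$ always installs the image of the planting stalk as the \emph{unique rightmost} red edge at level $3$ --- the subtrees merged beneath $xy$ contribute red edges only at level $\ge 6$, and any pre-existing level-$3$ red edge of $T_r$ or $T_{r-1}$ sits strictly to its left --- the map $\phi^{-1}$ can unambiguously locate $xy$ and strip off the stalk. I would then check that the local shape of $\phi(T)$ at $xy$ selects the matching inverse case: Case $1$ of $\phi$ leaves $x$ with more than one child (the child carrying the old level-$3$ edge of $T_r$, together with $y$), which is exactly the trigger for Case $1$ of $\phi^{-1}$; Case $2$ leaves $x$ with the single child $y$ while a second descending path of length $\ge 2$, coming from the copy of $T_{r-1}$, starts at $q$ to the left of $qx$, the trigger for Case $2$; and Case $3$ leaves $x$ with a single child and no second long path from $q$, since the remaining edges at $q$ are leaves, the trigger for Case $3$. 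In each case a short computation identifies the deleted piece $Q$ (together with the reconstructed paths) with the appropriate subtree, so the induction hypothesis gives $\phi^{-1}(T_j)=\tau(vw_j)$ and the pieces reassemble into $T$; the symmetric identity $\phi\circ\phi^{-1}=\mathrm{id}$ runs the same way. The only genuine care needed is the level bookkeeping under the $\pm 3$ shifts and checking that no case of $\phi^{-1}$ can fire on a tree lying outside the image of the corresponding case of $\phi$.
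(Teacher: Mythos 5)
Your proposal is correct and follows essentially the same route as the paper: the paper's argument \emph{is} the recursive three-case construction of $\phi$ and $\phi^{-1}$ together with the observation that the planting stalk becomes the rightmost red edge at level $3$ while previously built red edges persist (shifted by $+3$ or left in place), and your induction merely makes these invariants and the case-matching between $\phi$ and $\phi^{-1}$ explicit. Your identification of the key point --- that the image of the stalk is always the unique rightmost level-$3$ red edge, so that $\phi^{-1}$ can locate it and the local shape at $x$ and $q$ selects the matching inverse case --- is exactly the bookkeeping the paper relies on.
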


Now we are able to establish the bijection $\Phi$ requested in Theorem \ref{thm:equidistribution-trees} as well as in Theorem \ref{thm:main-2}.

Given an ordered tree $T\in\T_n$ with $k$ exterior edges, let $u$ be the root of $T$ and let $v_1,\dots,v_r$ be the children of $u$, for some $r\ge 1$. Then $T$ can be decomposed into $r$ planted subtrees
\[T=\tau(uv_1)\cup\cdots\cup\tau(uv_r).\]
Suppose that $\tau(uv_i)$ contains $k_i$ exterior edges, where $k_1+\cdots+k_r=k$. Making use of the bijection $\phi$ in Proposition \ref{pro:equidistribution-planted trees}, we find the corresponding planted subtrees $T_i=\phi(\tau(uv_i))$  ($1\le i\le r$), where $T_i$ contains $k_i$ red edges. Then the corresponding tree $\Phi(T)=T_1\cup \cdots\cup T_k$, obtained by merging the roots of $T_1,\dots,T_k$, contains $k$ red edges, i.e., $k$ edges at level $h$ with $h\equiv 0$ (mod 3).
This completes the proof of Theorem  \ref{thm:equidistribution-trees}.

\smallskip
\begin{exa} \label{exa:example-bijection} {\rm
Given the Dyck path $\pi$, shown on the left of Figure \ref{fig:preorder}, with 2 blocks and 4 exterior steps, we find the corresponding ordered tree $T=\Lambda(\pi)$, shown on the right of Figure \ref{fig:preorder}, and decompose $T$ into two planted subtrees $T=T_1\cup T_2$. Following Examples \ref{exa:example-phi} and \ref{exa:example-phi-2}, we construct the trees $\phi(T_1)$ and $\phi(T_2)$, respectively. Then the corresponding tree $\Phi(T)$ is obtained by merging the roots of $\phi(T_1)$ and $\phi(T_2)$, shown on the right of Figure \ref{fig:bijection}. Note that $\Phi(T)$ contains 4 red edges. Hence, by $\Lambda^{-1}$, we obtain the corresponding Dyck path $\Pi(\pi)=\Lambda^{-1}(\Phi(\Lambda(\pi)))$, shown on the left of Figure \ref{fig:bijection}, which contains 4 up steps at height $h$ with $h\equiv 0$ (mod 3).
}
\end{exa}

\begin{figure}[ht]
\begin{center}
\includegraphics[width=4.9in]{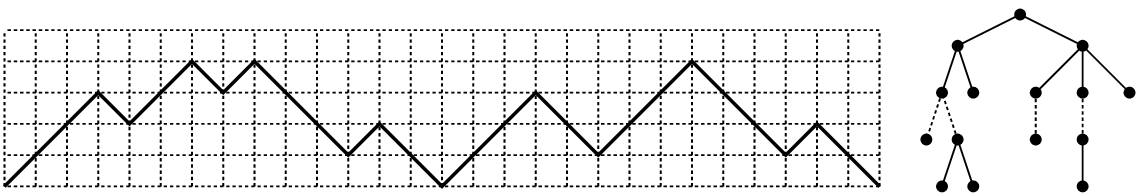}
\end{center}
\caption{\small A Dyck path and the corresponding ordered tree.} \label{fig:bijection}
\end{figure}

\section{Generating functions}
In this section, for $m\ge 2$ and $R\subseteq\{0,1,\dots,m-1\}$ ($R\neq\emptyset$), we study the generating function $G^{(R;m)}$ for Dyck paths counted according to length and number of up steps at height $h$ such that $h\equiv c$ (mod $m$) and $c\in R$.
Let $\lambda$ be a boolean function defined by $\lambda(\mbox{true})=1$ and $\lambda(\mbox{false})=0$.
By abuse of notation, let
\[ R-i=\{c':c-i+m\equiv c'\mbox{ (mod $m$), }c\in R\}.
\]

\begin{thm} \label{thm:continued} For $m\ge 2$ and a nonempty set $R\subseteq\{0,1,\dots,m-1\}$, the generating function $G^{(R;m)}$ satisfies the equation
\[
G^{(R;m)}=\frac{1}{\displaystyle 1-
          \frac{xy^{\lambda(1\in R)}}{\displaystyle 1-
          \frac{xy^{\lambda(2\in R)}}{\displaystyle
          \frac{\ddots}{\displaystyle 1-
          \frac{xy^{\lambda(m-1\in R)}}{1-xy^{\lambda(0\in R)}G^{(R;m)}}}}}}.
\]
\end{thm}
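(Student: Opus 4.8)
The plan is to derive the continued-fraction equation by conditioning on the structure of a Dyck path according to its returns to each successive height, exploiting the periodicity mod~$m$ of the colored (weighted) up steps. The key observation is that a continued fraction of this shape is the signature of a height-stratified decomposition: the $j$th level of the fraction records paths living at height exactly $j$, the weight $xy^{\lambda(j\in R)}$ records a single up step from height $j-1$ to height $j$ together with its matching down step, and the recursion closes up after $m$ levels because the coloring is periodic.

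\smallskip
First I would set up the standard first-return (or last-return) decomposition for Dyck paths, but refined to track height. For each residue class, introduce auxiliary generating functions $F_j$ counting the paths that stay at height $\ge j$ between a fixed up step at height~$j$ and its matching down step, weighted so that an up step rising to height~$h$ contributes a factor~$y$ precisely when $h\equiv c\pmod m$ for some $c\in R$; concretely the up step into height~$j$ carries the weight $xy^{\lambda(j\bmod m\,\in R)}$ (the factor~$x$ accounting for the up step and its matching down step together). The classical Dyck-path functional equation at each height reads $F_j=1/(1-xy^{\lambda(j\bmod m\in R)}F_{j+1})$, where $F_{j+1}$ encodes the excursions one level higher. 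This recursion is exactly what unfolds into the displayed continued fraction.

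\smallskip
The crucial step, and the one I expect to be the main obstacle, is to close the recursion using periodicity. Because the weight on an up step depends only on its height mod~$m$, the functions $F_j$ depend only on $j\bmod m$, so there are really only $m$ distinct unknowns $F_0,\dots,F_{m-1}$ satisfying a cyclic system $F_j=1/(1-xy^{\lambda(j\in R)}F_{j+1})$ with indices read mod~$m$. I would track the bookkeeping carefully so that $G^{(R;m)}=F_0$ corresponds to the whole path (paths starting at height~$0$): unwinding the cycle $F_0\to F_1\to\cdots\to F_{m-1}\to F_0$ reproduces the finite continued fraction, and the reappearance of $G^{(R;m)}=F_0$ in the innermost denominator $1-xy^{\lambda(0\in R)}G^{(R;m)}$ is precisely the wrap-around forced by the period-$m$ coloring. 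Care is needed with the indexing convention (whether an up step's height is labeled by its source or its target level) so that the exponents $\lambda(1\in R),\lambda(2\in R),\dots,\lambda(m-1\in R),\lambda(0\in R)$ appear in the stated order; the auxiliary notation $R-i$ introduced just before the theorem is designed to absorb this shift cleanly. Once the cyclic system is written down and the indices verified, the continued-fraction identity follows by substituting each $F_{j+1}$ into the equation for $F_j$ in turn, and the claimed quadratic equation in $G^{(R;m)}$ is then obtained by clearing the (finitely many) denominators.
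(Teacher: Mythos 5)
Your proposal is correct and follows essentially the same route as the paper: the paper applies the first-return decomposition to the shifted generating functions $G^{(R-i;m)}$, obtains $G^{(R-i;m)}=1/(1-xy^{\lambda(1\in R-i)}G^{(R-i-1;m)})$, and closes the cycle via $R-m=R$, which is exactly your cyclic system once one identifies $F_j=G^{(R-j;m)}$. The only caveat is the bookkeeping point you already flagged: the weight in your recursion should be indexed by the target height, i.e.\ $F_j=1/(1-xy^{\lambda(j+1\bmod m\,\in R)}F_{j+1})$, so that the exponents appear in the stated order $\lambda(1\in R),\dots,\lambda(m-1\in R),\lambda(0\in R)$.
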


\begin{proof} For $0\le i\le m-1$, we enumerate the paths $\pi\in\C_n$ with respect to the number of up steps at height $h$ with $h\equiv c$ (mod $m$) and  $c\in R-i$. By the \emph{first-return decomposition} of Dyck paths, a non-trivial path $\pi\in\C_n$ has a factorization $\pi=\U\mu\D\nu$, where $\mu$ and $\nu$ are Dyck paths of certain lengths (possibly empty). We observe that $y$ marks the first step $\U$ if $1\in R-i$. Moreover, the other up steps in the first block  $\U\mu\D$ that satisfy the height constrain are the up steps in $\mu$ at height $h$ with $h\equiv c-1+m$ (mod $m$). Hence $G^{(R-i;m)}$ satisfies the following equation
\[G^{(R-i;m)}=1+xy^{\lambda(1\in R-i)}G^{(R-i-1;m)}G^{(R-i;m)}.
\]
Hence we have
\[
G^{(R-i;m)}=\frac{1}{1-xy^{\lambda(1\in R-i)}G^{(R-i-1;m)}}.
\]
By iterative substitution and the fact $R-m=R$, the assertion follows.
\end{proof}

\smallskip
\begin{exa} \label{exa:GFproof}
{\rm
Take $m=3$ and $R=\{0\}$, we have
\[
G^{(0;3)}=\frac{1}{\displaystyle 1-
          \frac{x}{\displaystyle 1-
          \frac{x}{1-xyG^{(0;3)}}}},
\]
which is equivalent to
\[ xy(1-x)(G^{(0;3)})^2-(1-2x+xy)G^{(0;3)}+(1-x)=0.\]
Solving this equation yields
\[G^{(0;3)} = {\displaystyle \frac{1-2x+xy-\sqrt{(1-xy)^2-4x(1-x)(1-xy)}}{2xy(1-x)}},\]
which coincides with the generating function for Dyck paths counted by length and number of exterior pairs (cf. \cite[Theorem 2.3]{DS}).
}
\end{exa}

\section{A bijective proof of Theorem \ref{thm:main}}

 Let $\A^{(m-1;m)}_{n,j}\subseteq\C_n$ (resp. $\A^{(0;m)}_{n,j}\subseteq\C_n$) be the set of paths containing exactly $j$ up steps at height $h$ with $h\equiv m-1$ (resp. $h\equiv 0$) (mod $m$).
In this section, we shall prove Theorem \ref{thm:main} by establishing the following bijection.

\begin{thm} \label{thm:Psi} For the Dyck paths in $\C_n$ of height at least $m-1$, the following results hold.
\begin{enumerate}
\item For $j\ge 2$, there is a bijection $\Psi_j$ between $\A^{(m-1;m)}_{n,j}$ and $\A^{(0;m)}_{n,j-1}$.
\item For $j=1$, there is a bijection $\Psi_1$ between $\A^{(m-1;m)}_{n,1}$ and the set $\B\subseteq\A^{(0;m)}_{n,0}$, where $\B$ consists of the paths that contain no up steps at height $h$ with $h\equiv 0$ (mod $m$) and contain at least one up step at height $h'$ with $h'\equiv m-1$ (mod $m$).
\end{enumerate}
\end{thm}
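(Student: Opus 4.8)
The plan is to package the two cases of Theorem~\ref{thm:Psi} into a single bijection $\Psi$ on the set of Dyck paths in $\C_n$ of height at least $m-1$, arranged so that it drops the statistic by exactly one. Writing $a_c(\pi)$ for the number of up steps of $\pi$ at height $\equiv c\pmod m$, I would arrange
\[
a_{m-1}(\pi)=a_{0}(\Psi(\pi))+1
\]
for every path $\pi$ of height at least $m-1$. Restricting to the fiber where $a_{m-1}(\pi)=j$ then gives $\A^{(m-1;m)}_{n,j}\to\A^{(0;m)}_{n,j-1}$ for $j\ge 2$; for $j=1$ the image has $a_0=0$ yet still reaches level $m-1$, so it still has an up step at height $\equiv m-1$ and lands in $\B$. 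I would first record the structural fact underlying the hypothesis: a path reaches height $m-1$ if and only if it has an up step at height $m-1$, so the paths of height at most $m-2$ are exactly those with no up step at height $\equiv m-1$, i.e.\ $\A^{(m-1;m)}_{n,0}$, and these are precisely the members of $\A^{(0;m)}_{n,0}$ excluded from $\B$. This bookkeeping is what converts the existence of $\Psi$ into the two clauses of Theorem~\ref{thm:main}.

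To locate the relevant steps I would cut $\pi$ along the line $y=m-2$ into its maximal arches lying above that line. Each arch, translated down by $m-2$, becomes a primitive Dyck path; since the heights $1,\dots,m-2$ are never $\equiv 0$ nor $\equiv m-1$, every step counted by $a_{m-1}$ or $a_0$ lies inside some arch, while the low steps joining the arches are inert. In these coordinates an up step at height $\equiv m-1$ (resp.\ $\equiv 0$) becomes an up step at relative height $\equiv 1$ (resp.\ $\equiv 2$). The leading up step of each arch sits at relative height $1$, so $a_{m-1}$ counts, among other things, one unit per arch, and discarding a single such leading step is the natural way to produce the global $+1$. A purely arch-by-arch matching cannot work, however: on a primitive path the relative-$\equiv 1$ count is always positive while the relative-$\equiv 2$ count can vanish (the minimal arch $\U\D$), so the two are not equidistributed arch-locally and the correction must be made globally.

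For the global construction I would recurse through the first-return decomposition $\pi=\U\mu\D\nu$ exactly as in the proof of Theorem~\ref{thm:continued}. Passing into $\mu$ shifts every height down by one, cyclically rotating the tracked residues $\equiv m-1,\equiv 0$ to $\equiv m-2,\equiv m-1$, and after $m$ levels the classes return to themselves; this periodicity is what the continued fraction encodes. The recursion rebuilds $\Psi(\pi)$ from $\Psi$ applied to $\mu$ and $\nu$ with the residues correctly reindexed, performing the local move that converts relative-$\equiv 1$ structure into relative-$\equiv 2$ structure (in the simplest instance promoting adjacent pyramids $(\U\D)^{k}$ into a nested $\U(\U\D)^{k-1}\D$). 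At the unique place where a leading step is discarded rather than converted, a minimal arch disappears; this single deletion is exactly what makes $a_0(\Psi(\pi))=a_{m-1}(\pi)-1$, and in the $j=1$ fiber it leaves a path of height $\ge m-1$ with no $\equiv 0$ up step, i.e.\ a member of $\B$.

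The main obstacle is to prove that this recursively defined map is a genuine length-preserving bijection with the exact offset, and the subtlety is concentrated in the residue bookkeeping across the downward shift together with the anomalous minimal arch, which has no length-preserving image and therefore forces the $+1$ to be absorbed once, globally and canonically. I would establish bijectivity by writing down the inverse directly, in the spirit of the construction of $\phi^{-1}$ in Section~2: in $\Psi(\pi)$ locate the distinguished up step (say the rightmost one at the relevant level), undo the promotion, and reinstate the discarded leading step, checking case by case that relative heights and total length are restored. Finally I would cross-check the resulting counts against the quadratic equations for $G^{(0;m)}$ and $G^{(m-1;m)}$ coming from Theorem~\ref{thm:continued}, as in Example~\ref{exa:GFproof}, to confirm the identity of Theorem~\ref{thm:main} independently.
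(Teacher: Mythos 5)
Your proposal does not yet contain a proof: the map $\Psi$ is never actually defined. You describe a recursive scheme through the first-return decomposition, but the only concrete local move you give is the promotion of $(\U\D)^{k}$ into $\U(\U\D)^{k-1}\D$ ``in the simplest instance,'' and you yourself flag the central difficulty --- that the map must preserve length while your description speaks of ``discarding'' a leading step and letting ``a minimal arch disappear'' --- as the main obstacle without resolving it. Since $\A^{(m-1;m)}_{n,j}$ and $\A^{(0;m)}_{n,j-1}$ both sit inside $\C_n$, nothing can be deleted; whatever trades an up step at height $\equiv m-1$ for structure at height $\equiv 0$ must be a rearrangement of the same $2n$ steps, and specifying that rearrangement, proving it is well defined, and exhibiting its inverse is the entire content of the theorem. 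The concluding cross-check against the quadratic equations from Theorem~\ref{thm:continued} would confirm Theorem~\ref{thm:main} numerically but does not produce the bijection the statement asks for.

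The paper's construction is entirely non-recursive and worth contrasting with your plan. Instead of one cut at $y=m-2$, it cuts $\pi$ by the whole family of lines $L_i:y=mi-1$, $i\ge 1$; the segments fall into five types (above-blocks and under-blocks on some $L_i$, upward and downward links, initial and terminal segments), and Lemma~\ref{lem:properties} records exactly which type carries which tracked up step: an above-block carries one at height $\equiv 0$, an under-block one at height $\equiv m-1$, an upward link one of each, the initial segment one at height $m-1$, and the rest none. The map then simply reflects each above-block and each under-block about its own line $L_i$ and fixes everything else. This is manifestly a length-preserving involution, it exchanges above-blocks with under-blocks, and the count $(j,k)\mapsto(k+1,j-1)$ follows by bookkeeping --- the $+1$ comes from the initial segment, not from deleting anything. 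Your observation that an arch-by-arch matching across a single cut cannot work is correct, but the remedy is a finer decomposition on which a local reflection does work, not a recursion; as it stands your proposal identifies the right target identity $a_{m-1}(\pi)=a_0(\Psi(\pi))+1$ but supplies no map achieving it.
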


\smallskip
Fix an integer $m\ge 2$. Given a $\pi\in\C_n$ of height at least $m-1$, we cut $\pi$ into segments by lines of the form $L_i:y=mi-1$ ($i\ge 1$). The segments $\omega\subseteq\pi$ are classified into the following categories.
\begin{enumerate}
\item[(S1)] Segment $\omega$ begins with an up step starting from a line $L_i$, for some $i\ge 1$, ends with the first down step returning to the line $L_i$ afterward, and never touches the line $L_{i+1}$. We call such a segment an \emph{above-block} on $L_i$.
\item[(S2)] Segment $\omega$ begins with a down step starting from a line $L_i$, for some $i\ge 1$, ends with the first up step reaching the line $L_i$ afterward, and never touches the line $L_{i-1}$. We call such a segment an \emph{under-block} on $L_i$.
\item[(S3)] Segment $\omega$ is called an \emph{upward link} if $\omega$ begins with an up step starting from a line $L_i$, for some $i\ge 1$, and ends with the first up step reaching the line $L_{i+1}$ afterward.
\item[(S4)] Segment $\omega$ is called a \emph{downward link} if $\omega$ begins with a down step starting from a line $L_i$, for some $i\ge 2$, and ends with the first down step returning to the line $L_{i-1}$ afterward.
\item[(S5)] The segment from the origin to the first up step that reaches the line $L_1$ is called the \emph{initial segment} of $\pi$. The segment starting from the last down step that leaves the line $L_1$ to the endpoint of $\pi$ is called the \emph{terminal segment} of $\pi$.
\end{enumerate}

\smallskip
\begin{exa} \label{exa:decomposition} {\rm
Take $m=3$. The Dyck path $\pi$ shown in Figure \ref{fig:upward-block}(a) is decomposed into nine segments $\pi=\omega_1\cdots\omega_9$, where $\omega_1=[O,A]$ is the initial segment, $\omega_9=[H,I]$ is the terminal segment, $\omega_2=[A,B]$, $\omega_5=[D,E]$, and $\omega_8=[G,H]$ are above-blocks, $\omega_3=[B,C]$ and $\omega_6=[E,F]$ are under-blocks, $\omega_4=[C,D]$ is an upward link, and $\omega_7=[F,G]$ is a downward link.
}
\end{exa}

\begin{figure}[ht]
\begin{center}
\includegraphics[width=4.4in]{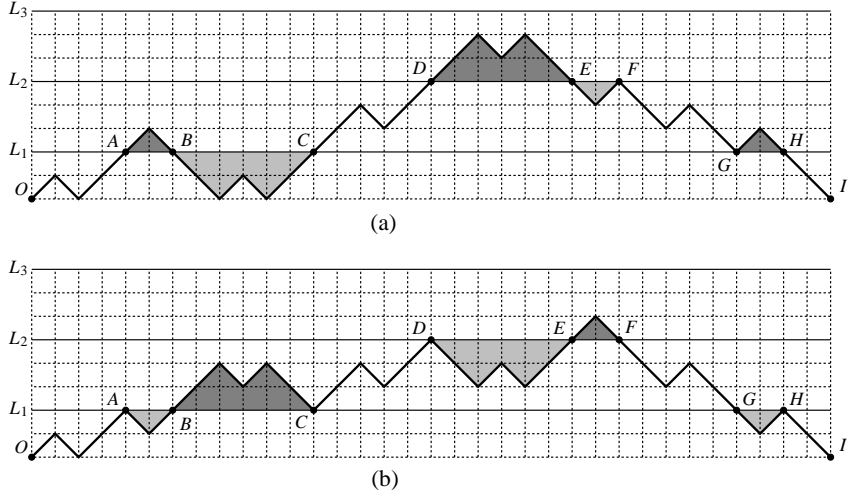}
\end{center}
\caption{\small Decomposition of a Dyck path by lines of the form $y=3i-1$ ($i\ge 1$).} \label{fig:upward-block}
\end{figure}

\smallskip
We have the following immediate observations.
\smallskip
\begin{lem} \label{lem:properties} According to the above decomposition of $\pi\in\C_n$ with respect to lines of the form $L_i:y=mi-1$ ($i\ge 1$), the following facts hold.
\begin{enumerate}
\item An above-block $\omega$ contains a unique up step (i.e., the first step of $\omega$) at height $h$ with $h\equiv 0$ (mod $m$), and contains no up steps at height $h'$ with $h'\equiv m-1$ (mod $m$).
\item An under-block $\omega$ contains a unique up step (i.e., the last step of $\omega$) at height $h$ with $h\equiv m-1$ (mod $m$), and contains no up steps at height $h'$ with $h'\equiv 0$ (mod $m$).
\item The first (resp. last) step of an upward link $\omega$ is the unique up step at height $h$ with $h\equiv 0$ (resp. with $h\equiv m-1$) (mod $m$) contained in $\omega$.
\item The last step of the initial segment of $\pi$ is the unique up step at height $m-1$ contained in $\omega$.
\item A downward link and the terminal segment of $\pi$ contain no up steps at height $h$ with $h\equiv 0$ or $m-1$ (mod $m$).
\end{enumerate}
\end{lem}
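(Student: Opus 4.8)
The plan is to reduce all five statements to a single observation about how up steps interact with the cutting lines $L_i:y=mi-1$. Since an up step at height $h$ runs from $y=h-1$ to $y=h$, it satisfies $h\equiv 0\pmod m$ exactly when its lower endpoint lies on some line $L_i$ (indeed $h\equiv 0$ forces $h-1\equiv m-1\equiv mi-1\pmod m$, and $h\ge m$ makes $i\ge 1$), and it satisfies $h\equiv m-1\pmod m$ exactly when its upper endpoint lands on some line $L_i$. Hence counting the up steps at height $\equiv 0$ (resp.\ $\equiv m-1$) inside a segment is the same as counting the up steps that \emph{leave} (resp.\ \emph{reach}) one of the lines $L_i$ within that segment.

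The second ingredient is that the interior of every block or link avoids all of the lines $L_i$: a line is touched only at the endpoints of a segment (and the initial/terminal segment touches $L_1$ at just one endpoint). This is precisely what the defining clauses of the decomposition supply. An above-block never touches $L_{i+1}$ and meets $L_i$ again only through its final down step; an under-block never touches $L_{i-1}$ and meets $L_i$ again only through its final up step; an upward link reaches $L_{i+1}$ only at its last step and, being read off between two successive encounters with the lines, cannot fall back onto $L_i$ in between (any such return would instead begin an above-block); and symmetrically the downward link first meets $L_{i-1}$ at its last step and does not re-touch $L_i$ in its interior. I would isolate this interior-avoidance property as the one technical point to verify carefully, because it is exactly where the left-to-right rule ``stop at the first step that returns to, or reaches, a line'' is genuinely used; I expect this to be the main obstacle, whereas everything after it is pure bookkeeping.

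Granting these two facts, each assertion follows at once. For (1), the only step of an above-block on $L_i$ that leaves a line is its first (up) step off $L_i$, giving a unique up step at height $\equiv 0$; the segment reaches $L_i$ again only through a down step, so no up step lands on a line and none occurs at height $\equiv m-1$. Statement (2) is the mirror image: the under-block leaves $L_i$ by its first (down) step, so no up step leaves a line, while its final (up) step is the unique one landing on $L_i$, hence the unique up step at height $\equiv m-1$. For (3), the upward link's first up step leaves $L_i$ (unique height $\equiv 0$) and its last up step reaches $L_{i+1}$ (unique height $\equiv m-1$), the interior touching no line. For (4), the initial segment stays strictly below $L_1$ until its final step lands on $L_1$, so that last step is the only up step at height $m-1$. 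Finally, in (5) both endpoints of a downward link are down steps (off $L_i$, onto $L_{i-1}$) and its interior meets no line, so no up step leaves or reaches a line; likewise the terminal segment leaves $L_1$ by a down step and never revisits a line, giving the same conclusion. Once the interior-avoidance property is phrased cleanly, the residue bookkeeping above closes every case.
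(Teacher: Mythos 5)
Your proof is correct and follows the intended reasoning: the paper states this lemma without proof as an ``immediate observation,'' and your reduction (an up step has height $\equiv 0$ resp.\ $\equiv m-1 \pmod m$ iff its lower resp.\ upper endpoint lies on some $L_i$, combined with the fact that each segment's interior avoids all the lines by the first-return clauses in (S1)--(S5)) is exactly the bookkeeping the authors leave to the reader. No gaps; the interior-avoidance point you flag is indeed the only place where the definitions are genuinely used, and you handle it correctly.
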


\smallskip
 For the above-blocks and under-blocks $\omega$ on some line $L_i$, we define an operation $\Gamma$ on $\omega$ by letting $\Gamma(\omega)$ be the segment obtained from $\omega$ by reflecting $\omega$ about the line $L_i$. Note that $\Gamma(\omega)$ is an under-block (resp. above-block) on $L_i$ if $\omega$ is an above-block (resp. under-block) on $L_i$. Making use of this operation, we define an involution $\Omega:\C_n\rightarrow\C_n$ as follows.

\smallskip
\noindent{\bf The involution $\Omega$.}
Given a $\pi\in\C_n$, if the height of $\pi$ is less than $m-1$, then we define $\Omega(\pi)=\pi$. Otherwise, the path $\pi$ has a factorization $\pi=\omega_1\cdots\omega_d$ ($d\ge 2$), called the \emph{standard form}, with respect to lines of the form $L_i:y=mi-1$ $(i\ge 1$), where $\omega_1$ is the initial segment, $\omega_d$ is the terminal segment, and each $\omega_r$ is a segment in one of the four categories (S1)--(S4), for $2\le r\le d-1$. The map $\Omega$ is defined by carrying $\pi$ to $\Omega(\pi)=\omega_1\widehat{\omega}_2\cdots\widehat{\omega}_{d-1}\omega_d$, where
\[\widehat{\omega}_r=\left\{ \begin{array}{ll}
                        \Gamma(\omega_r) & \mbox{if $\omega_r$ is an above-block or an under-block} \\
                        \omega_r &\mbox{if $\omega_r$ is an upward link or a downward link},
                   \end{array}
           \right.
\]
for $2\le r\le d-1$. It is obvious that $\Omega$ is an involution.

\smallskip
\begin{exa} \label{exa:involution} {\rm
Take $m=3$ and the path $\pi$ shown in Figure \ref{fig:upward-block}(a). As shown in Example \ref{exa:decomposition}, $\pi$ is factorized into the standard form $\pi=\omega_1\dots\omega_9$.  The corresponding path $\Omega(\pi)=\omega_1\widehat{\omega}_2\dots\widehat{\omega}_8\omega_9$ is shown in Figure \ref{fig:upward-block}(b), where $\widehat{\omega}_2=\Gamma(\omega_2)$, $\widehat{\omega}_3=\Gamma(\omega_3)$, $\widehat{\omega}_4=\omega_4$, $\widehat{\omega}_5=\Gamma(\omega_5)$, $\widehat{\omega}_6=\Gamma(\omega_6)$, $\widehat{\omega}_7=\omega_7$, and $\omega_8=\Gamma(\omega_8)$.
}
\end{exa}

\smallskip
Let $F^{(m)}_{n,j,k}\subseteq\C_n$ be the set of paths containing $j$ up steps at height $h$ with $h\equiv m-1$ (mod $m$) and $k$ up steps at height $h'$ with $h'\equiv 0$ (mod $m$).

\smallskip
\begin{pro} \label{pro:involution} For $j\ge 1$ and $k\ge 0$, the involution $\Omega$ induces a bijection $\Omega_{j,k}:F^{(m)}_{n,j,k}\rightarrow F^{(m)}_{n,k+1,j-1}$.
\end{pro}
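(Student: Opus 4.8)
The plan is to show that the involution $\Omega$ already constructed restricts to the desired bijection, so that essentially no new map needs to be built. Since the text observes that $\Omega$ is an involution on $\C_n$, it suffices to prove one inclusion: that $\Omega$ carries $F^{(m)}_{n,j,k}$ into $F^{(m)}_{n,k+1,j-1}$ whenever $j\ge 1$. Applying that same statement with $(j,k)$ replaced by $(k+1,j-1)$ gives the reverse inclusion, and $\Omega\circ\Omega=\mathrm{id}$ then forces the two restrictions $\Omega_{j,k}$ and $\Omega_{k+1,j-1}$ to be mutually inverse, hence bijections.

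First I would check that the decomposition is actually available: if $\pi\in F^{(m)}_{n,j,k}$ with $j\ge 1$, then $\pi$ has an up step at height $h\equiv m-1$ (mod $m$), so its height is at least $m-1$ and $\pi$ admits a standard form $\pi=\omega_1\cdots\omega_d$. The key step is then a bookkeeping count. Let $a$, $b$, $p$ be the numbers of above-blocks, under-blocks, and upward links among $\omega_2,\dots,\omega_{d-1}$; downward links and the terminal segment will contribute nothing and can be ignored. Reading off Lemma \ref{lem:properties}: the initial segment contributes one up step at height $\equiv m-1$ and none at height $\equiv 0$ (part (iv)); each above-block contributes one at height $\equiv 0$ and none at height $\equiv m-1$ (part (i)); each under-block contributes one at height $\equiv m-1$ and none at height $\equiv 0$ (part (ii)); each upward link contributes exactly one of each (part (iii)); and downward links together with the terminal segment contribute none of either (part (v)). Summing gives $j=1+b+p$ and $k=a+p$.

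Now I would apply $\Omega$. The reflection $\Gamma$ turns each above-block on $L_i$ into an under-block on $L_i$ and conversely, while leaving upward and downward links and the initial and terminal segments unchanged; it clearly preserves both the length of each segment and its total number of up steps, since an above- or under-block is balanced and $\Gamma$ merely interchanges its up and down steps. Thus $\Omega(\pi)\in\C_n$ has the same length $n$, but now with $b$ above-blocks, $a$ under-blocks, and the same $p$ upward links, the initial segment being unaffected. Repeating the count shows that $\Omega(\pi)$ has $1+a+p$ up steps at height $\equiv m-1$ and $b+p$ up steps at height $\equiv 0$; since $1+a+p=k+1$ and $b+p=j-1$, we obtain $\Omega(\pi)\in F^{(m)}_{n,k+1,j-1}$, completing the required inclusion.

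I expect the only genuine obstacle to be this central count — specifically, confirming that $\Gamma$ affects exactly the two block-type tallies, swapping the $\equiv 0$ and $\equiv m-1$ contributions of a block, and that the many up steps at heights $\not\equiv 0,m-1$ sitting inside the blocks are entirely irrelevant to the statistics $j$ and $k$. This is precisely what parts (i) and (ii) of Lemma \ref{lem:properties} isolate: they record only the \emph{net} contribution of a block to the two residue classes, independent of its internal shape. Once those facts are invoked the arithmetic $1+a+p=k+1$, $b+p=j-1$ is immediate, and the involution property supplies the bijectivity for free.
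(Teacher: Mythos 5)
Your proof is correct and follows essentially the same route as the paper: decompose $\pi$ into its standard form, use Lemma \ref{lem:properties} to tally the contributions of the initial segment, above-blocks, under-blocks, and upward links to the two height statistics, observe that $\Omega$ swaps the roles of above- and under-blocks while fixing everything else, and deduce bijectivity from the involution property. The only cosmetic difference is that the paper singles out the case $(j,k)=(1,0)$ and declares $\Omega_{1,0}$ to be the identity, whereas your uniform count (which forces $a=b=p=0$ there) covers that case automatically.
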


\begin{proof}In particular, for $(j,k)=(1,0)$, we define $\Omega_{1,0}:F^{(m)}_{n,1,0}\rightarrow F^{(m)}_{n,1,0}$ to be an identity mapping, i.e., $\Omega_{1,0}(\pi)=\pi$, for $\pi\in F^{(m)}_{n,1,0}$.

For $(j,k)\neq (1,0)$, given a $\pi\in F^{(m)}_{n,j,k}$, we factorize $\pi$ into the standard form $\pi=\omega_1\cdots\omega_d$ ($d\ge 2$), with respect to lines $L_i:y=mi-1$ ($i\ge 1$).  Suppose that there are $t$ segments among $\omega_2,\dots,\omega_{d-1}$, which are upward links. Since $\pi$ contains $j$ up steps at height $h$ with $h\equiv m-1$ (mod $m$) and $k$ up steps at height $h'$ with $h'\equiv 0$ (mod $m$), by Lemma \ref{lem:properties}, there are $j-1-t$ segments $\mu_1,\dots,\mu_{j-1-t}\in\{\omega_2,\dots,\omega_{d-1}\}$ that are under-blocks and $k-t$ segments $\nu_1,\dots,\nu_{k-t}\in\{\omega_2,\dots,\omega_{d-1}\}$ that are above-blocks. Under the involution $\Omega$, the corresponding path $\Omega(\pi)$ contains $j-1-t$ above-blocks $\widehat{\mu}_1,\dots,\widehat{\mu}_{j-1-t}$ and $k-t$ under-blocks $\widehat{\nu}_1,\dots,\widehat{\nu}_{k-t}$. Along with the $t$ upward links in $\Omega(\pi)$ and the initial segment, by Lemma \ref{lem:properties},  $\Omega(\pi)$ contains $k+1$ up steps at height $h$ with $h\equiv m-1$ (mod $m$) and $j-1$ up steps at height $h'$ with $h'\equiv 0$ (mod $m$). Hence $\Omega_{j,k}(\pi)=\Omega(\pi)\in F^{(m)}_{n,k+1,j-1}$.

It is easy to see that $\Omega_{j,k}^{-1}=\Omega|_{F^{(m)}_{n,k+1,j-1}}=\Omega_{k+1,j-1}:F^{(m)}_{n,k+1,j-1}\rightarrow F^{(m)}_{n,j,k}$.
\end{proof}

\smallskip
\begin{exa} {\rm
Following Example \ref{exa:involution}, the path $\pi$ shown in Figure \ref{fig:upward-block}(a) contains four up steps at height $h$ with $h\equiv 2$ (mod 3) and four up steps at height $h'$ with $h'\equiv 0$ (mod 3). The corresponding path $\Omega_{4,4}(\pi)$, shown in Figure \ref{fig:upward-block}(b), contains five up steps at height $h$ with $h\equiv 2$ (mod 3) and three up steps at height $h'$ with $h'\equiv 0$ (mod 3).
}
\end{exa}

\smallskip
\noindent{\em Proof of Theorem \ref{thm:Psi}.} (i) For $j\ge 2$, we have
$\A^{(m-1;m)}_{n,j}=\cup_{k\ge 0} F^{(m)}_{n,j,k}$  and $\A^{(0;m)}_{n,j-1}=\cup_{k\ge 0} F^{(m)}_{n,k+1,j-1}$.  It follows from Proposition \ref{pro:involution} that the map $\Psi_j:\A^{(m-1;m)}_{n,j}\rightarrow\A^{(0;m)}_{n,j-1}$ is established by the refinement,  \[\Psi_j|_{F^{(m)}_{n,j,k}}=\Omega_{j,k}:F^{(m)}_{n,j,k}\rightarrow F^{(m)}_{n,k+1,j-1}, \mbox{ for $k\ge 0$}.\]

 (ii) For $j=1$, we have $\A^{(m-1;m)}_{n,1}=\cup_{k\ge 0} F^{(m)}_{n,1,k}$ and $\B=\cup_{k\ge 0} F^{(m)}_{n,k+1,0}$. It follows from Proposition \ref{pro:involution} that the map $\Psi_1:\A^{(m-1;m)}_{n,1}\rightarrow \B$ is established by the refinement,  \[\Psi_1|_{F^{(m)}_{n,1,k}}=\Omega_{1,k}:F^{(m)}_{n,1,k}\rightarrow F^{(m)}_{n,k+1,0}, \mbox{ for $k\ge 0$}.\] \qed

\smallskip
Now we are able to prove Theorem \ref{thm:main}. For $j\ge 2$, by Theorem \ref{thm:Psi}(i), we have
\[ [y^jx^n]\{G^{(m-1;m)}-y\cdot G^{(0;m)}\}=g_{n,j}^{(m-1;m)}-g_{n,j-1}^{(0;m)}=|\A^{(m-1;m)}_{n,j}|-|\A^{(0;m)}_{n,j-1}|=0.\]
For $j=1$, by Theorem \ref{thm:Psi}(ii), we have $g_{n,1}^{(m-1;m)}=|\A^{(m-1;m)}_{n,1}|=|\B|$, where $\B$ consists of the paths in $\C_n$ that contain no up steps at height $h$ with $h\equiv 0$ (mod $m$) and contain at least one up step at height $h'$ with $h'\equiv m-1$ (mod $m$). Hence
\[ [y^1x^n]\{G^{(m-1;m)}-y\cdot G^{(0;m)}\}=g_{n,1}^{(m-1;m)}-g_{n,0}^{(0;m)}=|\B|-|\A^{(0;m)}_{n,0}|
=-\frac{U_{m-2}(\frac{1}{2\sqrt{x}})}{\sqrt{x}U_{m-1}(\frac{1}{2\sqrt{x}})},\]
which is the negative of the number of paths in $\C_n$ of height at most $m-2$.
Moreover, $[y^0x^n]\{G^{(m-1;m)}-y\cdot G^{(0;m)}\}=g_{n,0}^{(m-1;m)}$ is also the number of paths in $\C_n$ of height at most $m-2$. This completes the proof of Theorem \ref{thm:main}.

\section{Concluding Notes}
Given a positive integer $s$, an \emph{$s$-ary} path of length $n$ is a lattice path from $(0,0)$ to $((s+1)n,0)$, using up step $(1,1)$ and grand down step $(1,-s)$,  that never passes below the $x$-axis. When $s=1$ it is an ordinary Dyck path. One can consider the $s$-generalization of pyramids and exterior pairs on $s$-ary paths. For example, a pyramid of height $k$ is a succession of $sk$ up steps followed immediately by $k$ down steps. An \emph{exterior down step} is a down step that does not belong to any pyramid. Let $p^{(s)}_{n,k}$ (resp. $e^{(s)}_{n,k}$) be the number of $s$-ary paths of length $n$ with a pyramid weight of $k$ (resp. with $k$ exterior down steps), and let $P$ and $E$ be the generating functions for $p^{(s)}_{n,k}$ and $e^{(s)}_{n,k}$, respectively, where
\[ P=P(x,y)=\sum_{n\ge 0} \sum_{k\ge 0} p^{(s)}_{n,k} y^kx^n, \qquad E=E(x,y)=\sum_{n\ge 0}\sum_{k\ge 0} e^{(s)}_{n,k} y^kx^n.
\]
Note that $E(x,y)=P(xy,y^{-1})$ since an $s$-ary path of length $n$ with a pyramid weight of $k$ contains $n-k$ exterior down steps.

\smallskip
\begin{pro} \label{pro:s-exterior} The generating functions $P$ and $E$ satisfy respectively the equations
\[P=1+x(P^s-\frac{1-y}{1-xy})P,\qquad E=1+x(yE^s+\frac{1-y}{1-x})E.\]
\end{pro}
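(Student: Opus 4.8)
The plan is to prove the identity for $P$ by a first-return decomposition that tracks the pyramid weight, and then to obtain the identity for $E$ for free from the relation $E(x,y)=P(xy,y^{-1})$ recorded just above the statement. Indeed, substituting $x\mapsto xy$ and $y\mapsto y^{-1}$ into $P=1+x(P^s-\tfrac{1-y}{1-xy})P$ and simplifying $xy\cdot\tfrac{1-1/y}{1-x}=-\tfrac{x(1-y)}{1-x}$ turns it into $E=1+x(yE^s+\tfrac{1-y}{1-x})E$. So only the first identity needs a combinatorial argument.

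First I would record the generalized first-return decomposition. Because a (grand) down step drops by $s$, no down step can occur before the path reaches height $s$; hence every nonempty $s$-ary path opens with $s$ up steps and factors uniquely as $\pi=\U\pi_1\U\pi_2\cdots\U\pi_s\D\,\pi_{s+1}$, where the displayed $\D$ is the first return to the $x$-axis, each $\pi_i$ is an $s$-ary path sitting at baseline $i$ (so it stays weakly above height $i$), and $\pi_{s+1}$ is arbitrary. At $y=1$ this recovers the Fuss--Catalan equation $C=1+xC^{s+1}$. Writing $B=\U\pi_1\cdots\U\pi_s\D$ for the primitive first block, and noting that no maximal pyramid can straddle a return to the $x$-axis, the pyramid weight is additive over first-return blocks, so $P=1+(\text{gf of }B)\cdot P$; everything reduces to computing the generating function of $B$ weighted by pyramid weight.

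The combinatorial heart is the claim that the pyramid weight $w(B)$ equals $\sum_{i=1}^{s}w(\pi_i)+\varepsilon$, where $\varepsilon=1$ if $B$ is itself a single pyramid and $\varepsilon=0$ otherwise, and moreover that $B$ is a pyramid exactly when $\pi_1=\cdots=\pi_{s-1}$ are empty and $\pi_s$ is a pyramid (possibly empty, in which case $B=\U^{s(k+1)}\D^{k+1}$ for $\pi_s=\U^{sk}\D^{k}$). Granting the claim, the \emph{naive} weight $\sum_i w(\pi_i)$ has generating function $xP^s$ (the $x$ marks the final $\D$, and each $\pi_i$ ranges over all $s$-ary paths, contributing $P$), while the true weight multiplies this by an extra $y$ precisely on the pyramid tuples, whose naive generating function is $x\cdot 1\cdots 1\cdot\sum_{k\ge0}x^ky^k=\tfrac{x}{1-xy}$. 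Hence
\[
\text{gf of }B=xP^s+(y-1)\tfrac{x}{1-xy}=xP^s-x\tfrac{1-y}{1-xy},
\]
and feeding this into $P=1+(\text{gf of }B)\,P$ gives $P=1+x(P^s-\tfrac{1-y}{1-xy})P$, as desired.

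The hard part is proving the claim, namely that the outer steps of $B$ and the connector up steps never merge an interior pyramid into a larger one unless $B$ is globally a pyramid. I would show that in the non-trivial case no maximal pyramid $Q=\U^{sh}\D^h$ of $B$ can contain a connector up step or the final $\D$: a pyramid entering height $j$ through a connector would have to descend back to height $j-1$ inside $B$, but after that connector the path stays weakly above $j$ until the single final $\D$ jumps from $s$ directly to $0$ (skipping height $j-1$ when $s\ge2$); and the final $\D$ cannot complete a pyramid from the tail of $\pi_s$, since $\pi_s$, being closed, ends at its own baseline rather than $s$ levels above it, so it never ends in $\U^{sh}\D^{h-1}$. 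Combined with the fact that every nonempty path opens with $\U^s$—which forces $\pi_1,\dots,\pi_{s-1}$ to be empty whenever $B$ is a pyramid—these observations pin down $\varepsilon$ exactly. Once the claim is established the remaining steps are the routine generating-function bookkeeping above, and the equation for $E$ follows by the stated substitution.
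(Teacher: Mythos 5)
Your proof is correct and follows essentially the same route as the paper: the first-return decomposition $U_1\mu_1\cdots U_s\mu_s D\nu$ together with the observation that the first down step acquires an extra factor $y$ (for $P$) exactly when the first block is a pyramid, which happens precisely when $\mu_1=\cdots=\mu_{s-1}=\emptyset$ and $\mu_s$ is a (possibly empty) pyramid, yielding the correction term $-x\frac{1-y}{1-xy}$. The only differences are cosmetic: you obtain the equation for $E$ by the substitution $E(x,y)=P(xy,y^{-1})$ (which the paper records just before the proposition) rather than by a parallel direct argument, and you supply the justification for the additivity of pyramid weight across the decomposition, which the paper merely asserts.
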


\begin{proof} By the first-return decomposition of $s$-paths, a nontrivial $s$-path $\pi$ has a factorization $\pi=U_1\mu_1\cdots U_s\mu_s D\nu$, where $D$ is the first (grand) down step that returns to the $x$-axis, $U_i$ is the last up step in the first block $\beta=U_1\mu_1\cdots U_s\mu_s D\subseteq\pi$, which rises from the line $y=i-1$ to the line $y=i$ ($1\le i\le s$), and $\mu_1,\dots,\mu_s,\nu$ are $s$-ary paths of certain lengths (possibly empty). To enumerate the $s$-ary paths with respect to pyramid weight and length, we observe that the first down step $D$ is marked $y$ if and only if the first block $\beta$ is a pyramid, in which case $\mu_1=\cdots=\mu_{s-1}=\emptyset$ and $\mu_s$ is a pyramid of certain length. Hence $P$ satisfies the equation
\[P=1+x(P^s-\frac{1}{1-xy}+\frac{y}{1-xy})P.\]
Similarly, if we enumerate the $s$-ary paths with respect to the number of exterior down steps and length, then the first down step $D$ is marked $y$ if and only if the first block $\beta$ is not a pyramid. Hence $E$ satisfies the equation
\[E=1+x(y(E^s-\frac{1}{1-x})+\frac{1}{1-x})E,\]
as required.
\end{proof}

We are interested to know if there is any statistic regarding up steps, which is equidistributed with $p^{(s)}_{n,k}$ (or $e^{(s)}_{n,k}$) on the $s$-ary paths.

Theorem \ref{thm:main} gives a relation between the two generating functions $G^{(m-1;m)}$ and $G^{(0,m)}$.
It is natural to consider if there is any relation between
$G^{(i;m)}$ and $G^{(j,m)}$, for $0\le i,j\le m-1$. In fact, we have two promising observations from some evidence generated by computer.
We are interested in an algebraic or combinatorial proof.

\smallskip
\begin{con} The following relations hold.
\begin{enumerate}
\item For $m\ge 4$, we have
\[G^{(m-2,m)}-G^{(1,m)}=\frac{(1-t)U_{m-4}(\frac{1}{2\sqrt{x}})}{U_{m-2}(\frac{1}{2\sqrt{x}})-y\sqrt{x}U_{m-3}(\frac{1}{2\sqrt{x}})}.\]

\item For $m\ge 6$, we have
\[G^{(m-3,m)}-G^{(2,m)}=\frac{(1-t)U_{m-6}(\frac{1}{2\sqrt{x}})}{U_{m-2}(\frac{1}{2\sqrt{x}})-yU_{m-4}(\frac{1}{2\sqrt{x}})+\sqrt{x}U_{m-5}(\frac{1}{2\sqrt{x}})}.\]
\end{enumerate}
\end{con}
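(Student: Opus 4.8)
The plan is to push everything through the continued fraction of Theorem~\ref{thm:continued}, converted into an explicit $2\times 2$ transfer matrix, and then to exploit two rigidity properties of the resulting quadratics. Throughout write $g_k=x^{k/2}U_k(\tfrac{1}{2\sqrt{x}})$, so that $g_{k+1}=g_k-xg_{k-1}$ with $g_{-1}=0$, $g_0=g_1=1$; recall from the remark after Theorem~\ref{thm:main} that $g_{m-2}/g_{m-1}$ generates the Dyck paths of height at most $m-2$. (I read the $(1-t)$ in the statement as the misprint $(1-y)$.) The proof is uniform in a parameter $c$, and parts (i),(ii) will be the cases $c=1,2$.

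First I would encode the continued fraction for $G^{(c;m)}$ (with $1\le c\le m-1$) as a fixed point. The rung $z\mapsto 1-\tfrac{x}{z}$ is the M\"obius map of $R=\begin{pmatrix}1&-x\\1&0\end{pmatrix}$, the top reciprocal is $J=\begin{pmatrix}0&1\\1&0\end{pmatrix}$, and the bottom map $z\mapsto 1-xz$ is $S=\begin{pmatrix}-x&1\\0&1\end{pmatrix}$; the single $y$-weighted rung at level $c$ is $R+x(1-y)F$ with $F=\begin{pmatrix}0&1\\0&0\end{pmatrix}$. Thus $G^{(c;m)}$ is the power-series fixed point of the transformation with matrix $J\,R^{\,c-1}\bigl(R+x(1-y)F\bigr)R^{\,m-1-c}S$. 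Since $R^2=R-xI$ one gets $R^{k}=\begin{pmatrix}g_k&-xg_{k-1}\\ g_{k-1}&-xg_{k-2}\end{pmatrix}$, so the perturbation is rank one and every entry is a polynomial in the $g_k$. Collecting entries and telescoping with $g_{k+1}=g_k-xg_{k-1}$, I expect $G^{(c;m)}$ to satisfy $\ell_c\,G^2-\phi_c\,G+\kappa_c=0$ with
\[\ell_c=xg_{m-1}+x^2(1-y)g_{c-1}g_{m-2-c},\quad \phi_c=g_{m-1}+x(1-y)\bigl(g_{c-1}g_{m-1-c}+xg_{c-2}g_{m-2-c}\bigr),\quad \kappa_c=g_{m-1}+x(1-y)g_{c-2}g_{m-1-c};\]
as a check, the endpoint $c=m-1$ (together with the analogous bottom-weight computation for $c=0$) recovers the pair behind Theorem~\ref{thm:main}, and $m=3$ recovers Example~\ref{exa:GFproof}.

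The heart of the argument is that $G^{(c;m)}$ and $G^{(m-1-c;m)}$ are \emph{same-branch roots of two quadratics sharing both their leading coefficient and their discriminant}. The leading coefficient $\ell_c$ is symmetric under $c\mapsto m-1-c$ because $g_{c-1}g_{m-2-c}$ is. For the discriminants $\Delta_c:=\phi_c^2-4\ell_c\kappa_c$, I would invoke the Chebyshev product identity $g_ag_b-g_{a-1}g_{b+1}=x^{a}g_{b-a}$ (immediate from the Binet form $g_k=(\lambda_+^{k+1}-\lambda_-^{k+1})/(\lambda_+-\lambda_-)$ with $\lambda_+\lambda_-=x$, $\lambda_++\lambda_-=1$) to obtain $\phi_{m-1-c}-\phi_c=2x^{c+1}(1-y)g_{m-2-2c}$ and $\kappa_{m-1-c}-\kappa_c=x^{c}(1-y)g_{m-2-2c}$. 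Then
\[\Delta_{m-1-c}-\Delta_c=2x^{c}(1-y)g_{m-2-2c}\bigl[x(\phi_{m-1-c}+\phi_c)-2\ell_c\bigr],\]
and the bracket collapses to $0$ by one application of the recurrence. Since both series have constant term $1$ they carry $-\sqrt{\Delta_c}$, so the radicals cancel in the difference and
\[G^{(m-1-c;m)}-G^{(c;m)}=\frac{\phi_{m-1-c}-\phi_c}{2\ell_c}=\frac{x^{c}(1-y)g_{m-2-2c}}{g_{m-1}+x(1-y)g_{c-1}g_{m-2-c}},\]
which holds whenever $m\ge 2c+2$ — exactly the hypotheses $m\ge4$ for $c=1$ and $m\ge6$ for $c=2$.

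It then remains to specialize and match notation. For $c=1$ the denominator equals $g_{m-1}+x(1-y)g_{m-3}=g_{m-2}-xyg_{m-3}$, giving part (i); for $c=2$ it equals $g_{m-1}+x(1-y)g_{m-4}=g_{m-2}-xyg_{m-4}+x^2g_{m-5}$, giving part (ii) once the $g_k$ are rewritten in the $U_k$-form of the statement. I expect the main obstacle to be getting the transfer-matrix normalization right (the inner numerators are $x$, not $1$, so a careless reciprocal matrix produces the wrong quadratic) and then verifying the discriminant identity, which is precisely where the product formula and the three-term recurrence do the real work; the branch matching is routine. Finally, a combinatorial proof in the spirit of the involution $\Omega$ looks harder here: for $1\le c\le m-2$ the upward and downward links carry \emph{unequal} numbers of up steps at heights $\equiv c$ and $\equiv m-1-c\pmod m$, so reflecting above/under-blocks no longer balances the two statistics on its own, and one would need to control the link contributions separately.
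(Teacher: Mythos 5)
This statement is not proved in the paper at all: it appears in the Concluding Notes as an open conjecture, for which the authors explicitly request an algebraic or combinatorial proof. Your proposal, therefore, cannot be compared to a proof of record; instead it supplies the missing algebraic argument, and as far as I can check it is correct. I verified the key computations: with $g_k=x^{k/2}U_k(\tfrac{1}{2\sqrt{x}})$ one indeed has $R^k=\bigl(\begin{smallmatrix}g_k&-xg_{k-1}\\ g_{k-1}&-xg_{k-2}\end{smallmatrix}\bigr)$, the fixed-point equation coming from $J\,R^{c-1}(R+x(1-y)F)R^{m-1-c}S$ is exactly $\ell_c G^2-\phi_c G+\kappa_c=0$ with the three coefficients you display, the symmetry $\ell_{m-1-c}=\ell_c$ is immediate, the identity $g_ag_b-g_{a-1}g_{b+1}=x^ag_{b-a}$ gives $\phi_{m-1-c}-\phi_c=2x^{c+1}(1-y)g_{m-2-2c}$ and $\kappa_{m-1-c}-\kappa_c=x^c(1-y)g_{m-2-2c}$, and the bracket $x(\phi_{m-1-c}+\phi_c)-2\ell_c$ does vanish (each of the two groupings collapses by one application of $g_{k+1}=g_k-xg_{k-1}$, so "one application" is a slight understatement but harmless). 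Since $\phi_c(0,y)=\Delta_c(0,y)^{1/2}=1$ and $\ell_c=O(x)$, the unique power-series root of each quadratic is the $-\sqrt{\Delta}$ branch, so the radicals cancel and the difference specializes, after clearing powers of $\sqrt{x}$, to exactly the two displayed right-hand sides (with $t$ read as $y$, which is clearly the intended reading). Your closing remark is also apt: the paper's involution $\Omega$ reflects blocks between the lines $y=mi-1$ and exploits that links contribute one up step to each of the two residue classes $0$ and $m-1$, a balance that fails for the classes $c$ and $m-1-c$ with $1\le c\le m-2$, which is presumably why the authors could not extend their bijective method and left this as a conjecture. The only work remaining to turn your outline into a complete write-up is the routine expansion of the matrix product to confirm $\ell_c,\phi_c,\kappa_c$ (which I did confirm) and the branch bookkeeping you already describe.
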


\end{document}